\def\cT{{\mathcal T}}
\definecolor{orange}{rgb}{1,0.5,0}
\DeclareMathAlphabet{\mathpzc}{OT1}{pzc}{L}{it} %stylizowany
\newtheorem{definition}{Definition}[section]
\newtheorem{proposition}[definition]{Proposition}
\newtheorem{theorem}{Theorem}
\newtheorem{corollary}[definition]{Corollary}
\newtheorem{lemma}[definition]{Lemma}
\def\geq{\geqslant}
\def\leq{\leqslant}
\def\eps{\varepsilon}
\def\N{\mathbb{N}}
\def\cB{\mathcal{B}}
\newcommand{\bea}{\begin{eqnarray}}
  \newcommand{\eea}{\end{eqnarray}}
  \newcommand{\beab}{\begin{eqnarray*}}
  \newcommand{\eeab}{\end{eqnarray*}}
  \newcommand{\be}{\begin{equation}}
  \newcommand{\ee}{\end{equation}}
\newcommand{\cQ}{\mathcal Q}
\newcommand{\cC}{\mathcal C}
\newcommand{\cP}{\mathcal P}
\newcommand{\cR}{\mathcal R}
\newcommand{\cL}{\mathcal L}
\title{{Product of two staircase rank one transformations that is not loosely Bernoulli}}
\author{Adam Kanigowski and Thierry de la Rue}
\begin{document}
\bibliographystyle{amsplain}

\baselineskip=14pt \maketitle
\tableofcontents

\begin{abstract}
{
We construct two staircase rank one transformations whose Cartesian product is not loosely Bernoulli.}
\end{abstract}
\section{Introduction}
In this note we study the loosely Bernoulli property for zero entropy,  measure 
preserving transformations. Loose Bernoulliness was introduced by J.\ Feldman, 
\cite{Fel}, and  A.\ Katok, \cite{Kat0}\footnote{In \cite{Kat0} loosely 
Bernoulli is called standard.}.
Recall that a zero entropy measure preserving transformation is loosely Bernoulli (LB for short) if it is
isomorphic to a transformation induced from an irrational rotation of the 
circle. {It follows from  \cite{OrRuWe} that the class of loosely 
Bernoulli systems is broad: it is closed under taking factors, compact 
extensions and inverse limits. Moreover it contains all {\em finite rank 
systems}. First non-loosely Bernoulli examples were constructed by J.~Feldman \cite{Fel}  by a cutting and stacking method. D.~Ornstein, D.~Rudolph and B.~Weiss, \cite{OrRuWe}, constructed a {\em rank one} transformation $T$ such that $T\times T$ is not loosely Bernoulli. The example in \cite{OrRuWe} is based on Ornstein's construction in \cite{Orn} of a class of almost surely mixing rank one transformation with random spacers. On the other hand, M.~Gerber constructed in \cite{Ger} a mixing rank one transformation whose Cartesian square is loosely Bernoulli. An algebraic example of a zero entropy non-loosely Bernoulli transformation comes from the work of M.~Ratner, \cite{Rat1}, where it was shown that the Cartesian square of the
horocycle flow (on compact quotients) is not loosely Bernoulli, although horocycle flows are 
loosely Bernoulli, \cite{Ratner3}.} 

{In this note we study the loosely Bernoulli property for a natural class of mixing rank one transformations: {\em staircase} rank one systems (mixing for this class was proved by Adams, \cite{Adam}). Loose Bernoulliness for products of staircase rank one transformations is not known. Our main result implies in particular that there exit two staircase rank one transformations whose product is not loosely Bernoulli. 
\subsection{Statement of the main result}
 For a rank one system $T$, let $(p^T_n)_{n\in \N}$, $(a_{n,i})_{i=1}^{p_n^T}$ and $(h_n^T)_{n\in \N}$ denote the sequence of {\em cuts}, {\em spacers} and {\em heights} respectively (see Section \ref{sec:ro}).}
{For $0<\gamma<\gamma'<1$ define
\begin{multline}\label{rone}
\cC_{\gamma,\gamma'}:=\{T\in Rank(1)\;:\;\text{ there exists } n'_T\in \N\text{ 
such that for every }\; n\geq n'_T,  \\p^T_n\in[ (h^T_n)^{\gamma}, 
(h^T_n)^{\gamma'}),
a^T_{n,i}>a^T_{n,i-1}\text{ for } i=2,\ldots,p^T_n, \text{ and } a^T_{n,p_n}\leq 
(h^T_n)^{\gamma'} \}.
\end{multline}
Notice that if $T$ is a {\em staircase} rank one transformation, 
i.e. $a^T_{n,i}=i$ for $n\in \N$ and $i\in\{1,\ldots,p^T_n\}$,  and if $p^T_n\in [(h^T_n)^\gamma,(h^T_n)^{\gamma'}]$,
then $T\in C_{\gamma,\gamma'}$.}

{
\begin{definition}[$\delta$-alternating sequences]\label{def:alter}Fix 
$\delta>0$ and let $(a_n)$ and $(b_n)$ be two increasing sequences of positive 
integers. 
We say that $(b_n)$ is  $(a_n)$-{\em alternating with exponent 
$\delta$} if the following holds: there exists $n_0\in \N$ such that for every 
$n\geq n_0$, if $m(n)$ is unique such that $b_{m(n)}\leq a_{n}<b_{m(n)+1}$, then 
$b_{m(n)}^{1+\delta}<a_n$ and  $a_n^{1+\delta}<b_{m(n)+1}$.
Moreover, $(a_n)$ and $(b_n)$ are called {\em $\delta$-alternating}  if  $(a_n)$ 
is $(b_n)$-alternating with exponent $\delta$ and $(b_n)$ is $(a_n)$-alternating 
with exponent $\delta$.
\end{definition}
 For $0<\gamma<\gamma'<1$ let
 \begin{equation}\label{nonlb}
\cL^{\gamma,\gamma',\delta}:=\left\{(T,S)\in \cC_{\gamma,\gamma'}\times 
\cC_{\gamma,\gamma'}\;:\; (h_n^T)\text{ and }(h_n^S) \text{ are } 
\delta-\text{alternating}\right\}.
\end{equation}}
{
\begin{theorem}\label{main} If $T$ is weakly mixing and $(T,S)\in 
\cL^{\gamma,\gamma',\delta}$ with $\delta<\gamma$ then $T\times S$ is not 
loosely Bernoulli.
\end{theorem}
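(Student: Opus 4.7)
The approach is to verify Feldman's characterisation of failure of loose Bernoulliness: exhibit a finite partition $\cP$ of $X\times Y$ and a constant $c>0$ such that, for a positive-measure set of pairs $((x,y),(x',y'))\in(X\times Y)^2$, the Feldman $\bar f$-distance between the $\cP$-names along the $T\times S$-orbits of length $N$ is at least $c$ for infinitely many $N$. As $\cP$ I would take the product of simple partitions associated with the Rokhlin towers of $T$ and of $S$ (for instance, the product of the indicators of the level-$0$ bases), so that a $\cP$-name of a point records the times at which each factor orbit returns to its tower base.

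The scale selection uses the $\delta$-alternation and the constraint $\delta<\gamma$. For each large $n$ one picks the corresponding $m=m(n)$ with $h^S_m\le h^T_n<h^S_{m+1}$, and chooses an orbit length $N_n$ of order $h^T_n$, which by $\delta$-alternation lies in the band $((h^S_m)^{1+\delta},(h^S_{m+1})^{1/(1+\delta)})$. At this scale, a typical $T$-orbit of length $N_n$ traverses essentially one column of the level-$n$ tower of $T$, so its $T$-name is determined by the initial level $l^T(x)\in[0,h^T_n)$ up to a short tail going through the spacers of level $n+1$. In contrast, a typical $S$-orbit of length $N_n$ remains inside one column of the level-$(m+1)$ tower of $S$ (because $N_n<(h^S_{m+1})^{1/(1+\delta)}$) but crosses many level-$m$ subcolumns (because $N_n>(h^S_m)^{1+\delta}$); its $S$-name is dictated by the spacer pattern $a^S_{m+1,\cdot}$ starting from the initial subcolumn index $k^S(y)\in\{1,\dots,p^S_{m+1}\}$.

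The key rigidity comes from the strict monotonicity of the spacers of $S$ at level $m+1$: for two $S$-orbits starting in subcolumns $k_1\neq k_2$, the cumulative drift of the level-$m$ base-visit times over $i$ consecutive subcolumns satisfies
\[
\sum_{j=1}^{i}\bigl(a^S_{m+1,k_1+j}-a^S_{m+1,k_2+j}\bigr)\;\ge\;i|k_1-k_2|,
\]
a linearly growing quantity. Hence any order-preserving matching $\varphi$ aligning the $S$-names must use a displacement $\varphi(j)-j$ that grows linearly along the orbit; together with the constraint $|\varphi(j)-j|\le N_n$ this caps the number of $S$-matchable positions. In contrast, the $T$-part forces $\varphi(j)-j$ to be essentially the constant $l^T(x')-l^T(x)$, which is incompatible with the piecewise-linearly-growing displacement demanded on the $S$-side. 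Thus, for any pair with $k^S(y)\neq k^S(y')$ substantial and $l^T(x)\neq l^T(x')$ substantial, the joint $\bar f$-distance is bounded below.

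To conclude one needs a positive-measure set of pairs, uniform in $n$, on which $|l^T(x)-l^T(x')|\gtrsim h^T_n$ and $|k^S(y)-k^S(y')|\gtrsim p^S_{m+1}$ simultaneously. This is where weak mixing of $T$ is used: combined with the rank-one structure of $S$ it guarantees ergodicity of $T\times S$ and asymptotic uniform, near-independent distribution of the phase data $(l^T(x),k^S(y))$ on $[0,h^T_n)\times\{1,\dots,p^S_{m+1}\}$, so that the required joint event has measure bounded away from zero in $n$. A Borel--Cantelli-type argument along the infinite sequence $(N_n)$ then produces the needed pairs for infinitely many $n$, yielding the lower bound on $\bar f$ and contradicting loose Bernoulliness. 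The main obstacle is the $\bar f$-bookkeeping: $\bar f$-matchings are more flexible than single shifts, being piecewise-linear and allowed to break at spacer events, and one must show that the strict monotonicity of $(a^S_{m+1,i})$ rules out every such flexible matching once the essentially-constant $T$-shift requirement is imposed. A secondary difficulty is the precise numerics of the scale $N_n$, where the exponents produced by $\delta$-alternation must be balanced against $\gamma,\gamma'$ to yield a non-trivial cap on matchable positions, and this is exactly where the hypothesis $\delta<\gamma$ enters.
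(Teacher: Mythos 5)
Your heuristic does capture the paper's core idea (the $\delta$-alternating scales, monotone spacers producing a drift, the other coordinate producing rigidity, and the incompatibility of the two), but as a proof it has a genuine gap at exactly the crucial point. Your assertion that ``the $T$-part forces $\varphi(j)-j$ to be essentially the constant $l^T(x')-l^T(x)$'' is unjustified, and it is false as stated: rank-one names, especially for the coarse base-indicator partition you chose, are extremely repetitive, so an $\bar f$-matching is free to use different displacements on different stretches by exploiting the internal repetitions -- this flexibility is the whole difficulty of non-LB proofs. The paper never proves any global constancy of the displacement. It proves only a \emph{local} rigidity statement: restricting to matched times $s$ at which both coordinates are simultaneously close at scale $2^{-k}$ (the sets $A^k_\theta$), the larger-scale coordinate forces equal increments $i_w-i_s=j_w-j_s$ inside a window of length $(2^k)^{1+\xi}$ (via the ``different powers lie in different levels'' estimate \eqref{unif:dist} of Lemma \ref{lem:1}), and then the monotone-spacer drift of the smaller-scale coordinate (estimate \eqref{short:bl}) excludes rigid shifts of size in $[h_n,h_n^{1+2\xi}]$; this yields only a bound of order $2^k$ matched indices per window. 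Converting these window-local bounds into a global bound needs the covering/pigeonhole Lemma \ref{comb}, and the conclusion is assembled by decomposing the matching over all closeness scales $k$ with the summable bound $N/k^2$. None of this machinery (local rather than global rigidity, the window bound, the combinatorial covering lemma, the multi-scale decomposition over $k$ rather than a single scale $N_n\sim h_n^T$) appears in your sketch, and you yourself flag the ``$\bar f$-bookkeeping'' obstacle without resolving it -- but that bookkeeping \emph{is} the proof.

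There are also secondary gaps. Your partition (indicators of the bases) is too coarse: the paper takes $\cP_{n_0}$ to consist of \emph{all} levels of $\cT^T_{n_0}\times\cT^S_{n_0}$ (plus the complements), so that two matched symbols force horizontal distance at most $1/h_{n_0}$ in each coordinate, which is what feeds Lemmas \ref{lem:1} and \ref{lem:2}; with base indicators this link is lost and it is not even clear the resulting process fails to be LB. Your measure-theoretic step invokes weak mixing of $T$ to get near-independent equidistribution of the phases $(l^T(x),k^S(y))$ at the scales $n$ and $m(n)$; this quantitative joint equidistribution does not follow from mere ergodicity of $T\times S$, and the paper does not need it: the large sets are built coordinate-wise ($B\times C$ and the fibred sets $D_x\times D_y$, each of measure at least $99/100$) using the ergodic theorem in each factor and tower measure estimates, and the ``positive-measure set of pairs'' must in any case be quantified against $\eps$ to contradict Definition \ref{def:LB}. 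Finally, a small indexing slip: the spacers separating consecutive copies of $\cT^S_m$ inside $\cT^S_{m+1}$ are $a^S_{m,\cdot}$, not $a^S_{m+1,\cdot}$, so your drift inequality should involve partial sums of differences of those.
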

In Subection \ref{sec:next} we will also show the following:\begin{lemma}\label{lem:net} For every $T\in \cC_{\gamma,\gamma'}$, 
$0<\gamma<\gamma'<1/3$ there exists a staircase rank one $S\in \cC_{\gamma/3,3\gamma'}$ such that  
$(T,S)\in \cL^{\gamma/3,3\gamma',\gamma/4}$.
\end{lemma}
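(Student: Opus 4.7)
The plan is to construct the sequence of cuts $(p^S_n)$ so that, on the double-logarithmic scale, the heights $h^S_n$ land in the middle of the gaps between consecutive $T$-heights. Set $v_n := \log\log h^T_n$ and $u_n := \log\log h^S_n$. Since $\gamma'<1/3$ forces the spacer contribution in $T$ to be negligible next to $p^T_n h^T_n$, the definition of $\cC_{\gamma,\gamma'}$ gives $v_{n+1}-v_n \in [\log(1+\gamma),\log(1+\gamma')] + o(1)$. The same computation (using $3\gamma'<1$, so that the staircase spacer sum $(p^S_n)^2/2 = o(p^S_n h^S_n)$) shows that any candidate staircase $S$ with $p^S_n \in [(h^S_n)^{\gamma/3},(h^S_n)^{3\gamma'}]$ satisfies $u_{n+1}-u_n \in [\log(1+\gamma/3),\log(1+3\gamma')]+o(1)$, and any target value $\Delta$ in that interval is realised, up to $o(1)$, by choosing $p^S_n$ as the nearest positive integer to $(h^S_n)^{e^{\Delta}-1}$.

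\textbf{Target placement.} Fix $n_0$ large and set $M_k := (v_k + v_{k+1})/2$. Use the first few cuts $p^S_1,\dots,p^S_{m_0-1}$ (any values compatible with $\cC_{\gamma/3,3\gamma'}$) to grow $h^S$ until $u_{m_0}$ is within $o(1)$ of $M_{n_0}$; then for $m \geq m_0$ pick $p^S_m$ so that $u_{m+1} \approx M_{n_0 + m - m_0 + 1}$. The induced increment $u_{m+1} - u_m \approx \tfrac{1}{2}(v_{n_0+m-m_0+2} - v_{n_0+m-m_0})$ lies in $[\log(1+\gamma),\log(1+\gamma')]+o(1)$, comfortably inside $[\log(1+\gamma/3),\log(1+3\gamma')]$. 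Back on the cut scale this gives $p^S_m \in [(h^S_m)^{\gamma-o(1)},(h^S_m)^{\gamma'+o(1)}] \subset [(h^S_m)^{\gamma/3},(h^S_m)^{3\gamma'}]$ for large $m$. Since the staircase spacers $a^S_{m,i}=i$ are automatically strictly increasing and $a^S_{m,p^S_m} = p^S_m \leq (h^S_m)^{3\gamma'}$, we conclude $S \in \cC_{\gamma/3,3\gamma'}$.

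\textbf{Alternation check.} With $u_m \approx M_{n_0 + m - m_0}$, every $v_n$ (for large $n$) sits in a gap $(u_{m(n)}, u_{m(n)+1})$ at distances $\tfrac{1}{2}(v_n - v_{n-1})$ and $\tfrac{1}{2}(v_{n+1} - v_n)$ from its $u$-neighbours, both at least $\tfrac{1}{2}\log(1+\gamma) - o(1)$; symmetrically, each $u_m$ is the midpoint of $(v_{\ell(m)}, v_{\ell(m)+1})$ and sits at distance $\geq \tfrac{1}{2}\log(1+\gamma) - o(1)$ from each $v$-neighbour. The arithmetic inequality $\tfrac{1}{2}\log(1+\gamma) > \log(1+\gamma/4)$, i.e.\ $1+\gamma > (1+\gamma/4)^2$, reduces to $\gamma/2 > \gamma^2/16$ and so holds for every $\gamma \in (0,1)$. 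Re-exponentiating twice delivers $(h^S_{m(n)})^{1+\gamma/4} < h^T_n$ and $(h^T_n)^{1+\gamma/4} < h^S_{m(n)+1}$, together with the symmetric inequalities for each $u_m$, proving $(T,S) \in \cL^{\gamma/3,3\gamma',\gamma/4}$.

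\textbf{Main obstacle.} The whole argument is a careful bookkeeping once the right scale (double logarithm) and target (gap midpoints) are identified. The only substantive piece of arithmetic is the inequality $1+\gamma > (1+\gamma/4)^2$, which is exactly what dictates the exponent $\delta = \gamma/4$: midpoint placement yields only a square root of the available gap, so $\delta$ must lie below roughly $\gamma/2$. The remaining error sources---integer rounding of $p^S_m$, the quadratic staircase spacer contribution, and the finite start-up phase---are each $o(1)$ on the $\log\log$ scale and therefore do not spoil the alternation for large indices.
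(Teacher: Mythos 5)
Your construction is correct and is essentially the paper's own argument in different clothing: both interleave one $S$-height into each gap between consecutive $T$-heights with the margin dictated by the same arithmetic fact $(1+\gamma/4)^2<1+\gamma$ (the paper phrases this as an inductive choice of $\prod_i p_i^S$ inside an explicit interval, you as midpoint placement on the $\log\log$ scale), and both then read off $S\in\cC_{\gamma/3,3\gamma'}$ from the resulting ratios $h^S_{m+1}/h^S_m$. The only point you gloss over, which the paper checks explicitly, is the summability condition \eqref{eq:finmes} for $S$, but this follows at once from $p^S_m\le (h^S_m)^{3\gamma'}$ with $3\gamma'<1$, so the proof stands.
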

It follows from \cite{CreutzSilva2010} that staircase rank one transformations are mixing. Therefore, Theorem \ref{main} together with Lemma \ref{lem:net} has the following consequence:
\begin{corollary}There exists two staircase rank one transformations whose product is not loosely Bernoulli.
\end{corollary}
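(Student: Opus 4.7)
The plan is to synthesize the two preceding results with the known mixing property of staircase rank one transformations. Concretely, I fix exponents $0<\gamma<\gamma'<1/3$ (for instance $\gamma=1/4$, $\gamma'=3/10$). First I produce a staircase rank one transformation $T\in\cC_{\gamma,\gamma'}$: by the observation immediately following \eqref{rone}, one only needs a staircase construction whose cut sequence lies in $p^T_n\in[(h^T_n)^\gamma,(h^T_n)^{\gamma'}]$, which can be arranged recursively (say $p^T_n=\lfloor(h^T_n)^{(\gamma+\gamma')/2}\rfloor$, with $h^T_{n+1}$ then determined by the usual cutting-and-stacking recursion).

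Next I apply Lemma \ref{lem:net} to this $T$, obtaining a staircase rank one transformation $S\in\cC_{\gamma/3,3\gamma'}$ with $(T,S)\in\cL^{\gamma/3,3\gamma',\gamma/4}$. I note that $T$ itself also belongs to the larger class $\cC_{\gamma/3,3\gamma'}$, since widening the exponent window only relaxes the three conditions in \eqref{rone}. Thus the pair $(T,S)$ genuinely lies in $\cC_{\gamma/3,3\gamma'}\times\cC_{\gamma/3,3\gamma'}$ with the $\gamma/4$-alternation condition in \eqref{nonlb}.

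Finally I invoke Theorem \ref{main} with parameters $(\gamma/3,3\gamma',\gamma/4)$. The weak mixing hypothesis on $T$ is supplied by the fact that $T$ is a staircase rank one transformation, hence mixing by \cite{CreutzSilva2010} (or Adams' earlier theorem \cite{Adam}), and in particular weakly mixing. The inequality $\delta<\gamma$ required by the theorem reduces to $\gamma/4<\gamma/3$, which is immediate. Theorem \ref{main} therefore yields that $T\times S$ is not loosely Bernoulli, while both $T$ and $S$ are staircase rank one by construction, which is exactly the corollary.

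The only conceivable obstacle is the bookkeeping around the exponent shifts: Lemma \ref{lem:net} places $S$ into the wider class $\cC_{\gamma/3,3\gamma'}$ rather than $\cC_{\gamma,\gamma'}$, and one must check that the alternation exponent $\gamma/4$ still satisfies the strict inequality $\gamma/4<\gamma/3$ demanded by Theorem \ref{main} after this enlargement. This compatibility is precisely why Lemma \ref{lem:net} was formulated with exponent $\gamma/4$ (rather than, say, $\gamma/3$ itself), so the verification is immediate and no real obstacle appears.
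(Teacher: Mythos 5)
Your argument is correct and is essentially the paper's own derivation: build a staircase $T\in\cC_{\gamma,\gamma'}$ with $\gamma'<1/3$, apply Lemma~\ref{lem:net} to get a staircase $S$ with $(T,S)\in\cL^{\gamma/3,3\gamma',\gamma/4}$ (noting $\cC_{\gamma,\gamma'}\subset\cC_{\gamma/3,3\gamma'}$), and conclude via Theorem~\ref{main} using mixing of staircases for weak mixing and $\gamma/4<\gamma/3$ for the hypothesis $\delta<\gamma$. The extra details you supply (explicit choice of $p_n^T$, the class inclusion, the exponent check) are exactly what the paper leaves implicit.
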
}

\textbf{Acknowledgements:} The authors would like to thank Jean-Paul Thouvenot 
for several discussions on the subject.

\section{Basic definitions}\label{sec:def}

\subsection{Rank one maps}\label{sec:ro}
Recall that a rank one system is constructed by cutting and stacking: fix a 
sequence of {\em cuts } $(p_n)_{n\in \N}$ and a sequence of {\em spacers} 
$(a_{n,i})_{i=1}^{p_n}$, $n\in \N$. We define $h_1=1$ and inductively 
\be\label{eq:height}
h_n=p_{n-1}h_{n-1}+\sum_{i=1}^{p_{n-1}}a_{n-1,i}.
\ee
The sequence $(h_n)_{n\in \N}$ is the sequence of {\em heights}. The rank one 
system $T$ is constructed from the sequences $(p_n)_{n\in \N}$ and 
$(a_{n,i})_{i=1}^{p_n}$, $n\in \N$ in the following way: we start with the 
interval $[0,1]$ which we cut into $p_n$ equal intervals $(I^1_i)_{i=1}^{p_n}$. 
For every $i\in \{1,...,p_n\}$ we put $a_{1,i}$ spacers over $I^1_i$. Then we 
{\em stack} everything over $I^1_1$ and call this tower $\cT_2$ with base 
$I^1_1$. The transformation $T$ on $\cT_2$ just moves one level up except the 
last level, where it is not defined. Next, inductively at stage $n$, we cut the 
tower $\cT_n$ with base $I^n_1$ into $p_n$ equal subtowers, over i'th subtower 
we put $a_{n,i}$ spacers and then we stack to get tower $\cT_{n+1}$ with base 
$I^n_1$. Finally, the rank one map $T$ is defined almost everywhere, i.e. it is 
defined on $\cT_\infty:=\bigcup_{n\in \N} \cT_n$. Often, if there are more 
transformations involved, we will write a superscript $T$ in the sequences 
$(p_n)$, $(a_{n,i})$, $(h_n)$ and $\cT_n$. In what follows we will always assume 
that $p_n>1$ for every $n$. This implies that 
\be\label{eq:expgr}
h_n\geq 2^{n-1}.
\ee
We also assume below that the total measure of all the spacers is finite:
\be\label{eq:finmes}
\sum_{n=1}^{+\infty}\frac{1}{\prod_{k=1}^np_i}\left(\sum_{i=1}^{p_n}a_{n,i}
\right)<+\infty.
\ee
Under this condition, $T$ preserves a probability measure $\mu_T$ given by the 
normalized Lebesgue measure on $\cT_\infty$. It is a classical fact that 
rank one systems are uniquely ergodic. Hence $T$ acts on $(\cT_{\infty}, 
\cB,\mu_T)$. Moreover, \eqref{eq:finmes} implies the following bounds on the 
sequence of heights: there exists $1\le K<\infty$ such that for every $n\in \N$
\be\label{eq:bhe}
\prod_{i=1}^{n-1}p_i\leq h_n\leq K \prod_{i=1}^{n-1}p_i.
\ee
Indeed, the LHS is immediate from \eqref{eq:height}.  The RHS follows by 
\eqref{eq:finmes} and \eqref{eq:height} since for 
$\epsilon_n=\frac{1}{\prod_{i=1}^n p_i}\left(\sum_{i=1}^{p_n}a_{n,i}\right)$, we 
have
$$
\sum_{i=1}^{p_{n-1}}a_{n-1,i} = \epsilon_{n-1}\prod_{i=1}^{n-1}p_i\leq 
\epsilon_{n-1} p_{n-1} h_{n-1}
$$
and so by \eqref{eq:height}
$$
h_n\leq p_{n-1}h_{n-1}(1+\epsilon_{n-1})
$$
and recursively 
$$
h_n\leq \left(\prod_{i=1}^{n-1}(1+\epsilon_i)\right)\prod_{i=1}^{n-1}p_i\leq K 
\prod_{i=1}^{n-1}p_i,
$$
where $K:=\prod_1^\infty(1+\epsilon_i)<\infty$ by~\eqref{eq:finmes}. This shows \eqref{eq:bhe}.
\bigskip

 For $n\in \N$ and $x,y$ in one level of $\cT_{n}$ we define the {\em 
horizontal} distance of $x,y$, setting
\be\label{eq:disthor}
d_H(x,y):=\frac{1}{h_{m(x,y)}},
\ee
where $m=m(x,y)\geq n$ is the largest number such that $x,y$ are in one level of 
$\cT_{m}$ (i.e. they are in different levels of $\cT_{m+1}$). The set of rank one transformations  (satisfying \eqref{eq:finmes}) 
will be denoted $Rank(1)$.

\subsection{Loosely Bernoulli transformations}
We recall the definition of $\bar{f}$ metric introduced in \cite{Fel}. For two 
finite words (over a finite alphabet) $A=a_1...a_k$ and $B=b_1...b_k$ a {\em 
matching} between $A$ and $B$ is 

any pair of strictly increasing sequences $(i_s,j_s)_{s=1}^r$ such that 
$a_{i_s}=b_{j_s}$ for $s=1,...r$. The $\bar{f}$ distance between $A$ and $B$ is 
defined by 
$$
\bar{f}(A,B)=1-\frac{r}{k},
$$
where $r$ is the maximal cardinality over all matchings between $A$ and $B$.

Let $T:(X,\cB,\mu)\to (X,\cB,\mu)$ be a measure preserving automorphism. For a 
finite partition $\cP=(P_1,...P_r)$ of $X$ and an integer $N\geq 1$ we denote 
$\cP_0^{N}(x)=x_0...x_{N-1}$, where 
$x_i\in\{1,...,r\}$ is such that $T^i(x)\in P_{x_i}$ for $i=0,...,N-1$.

\begin{definition}\label{def:LB} The process $(T,\cP)$ is said to be {\em 
loosely Bernoulli} if for every $\eps>0$ there exists $N_\eps\in \N$ and a set 
$A_\eps\in \cB$, $\mu(A_\eps)>1-\eps$ such that for every $x,y\in A_\eps$ and 
every $N\geq N_\eps$
\be\label{eq:leqeps}
\bar{f}(\cP_0^N(x),\cP_0^N(y))<\eps.
\ee
The automorphism $T$ is {\em loosely Bernoulli} (LB for short) if for every 
finite measurable partition $\cP$, the process $(T,\cP)$ is loosely Bernoulli. 
\end{definition}

To prove Theorem \ref{main}, in view of Definition \ref{def:LB} it is enough to 
show that there exists a finite partition $\cP$ such that the process $(T\times 
S,\cP)$ is not LB. Recall that we have an exhaustive sequence of towers 
$(\cT_n^T)$ and $(\cT_n^S)$ for $T$ and $S$ respectively. For $n\in \N$ let  
$\cQ_n$ be the partition of $\cT^T_\infty$ into levels of $\cT_n^T$ and 
$\cT^T_{\infty}\setminus \cT_n^T$ ($\cT_\infty^T\setminus \cT^T_n$ is the union 
of all spacers at stages $\geq n$). Let $\cR_n$ be the analogous partition of 
$\cT^S_\infty$ and define $\cP_n:=\cQ_n\times \cR_n$. Then $\cP_n$ is a 
partition of $\cT^T_\infty\times \cT^S_\infty$ (in fact, the sequence $(\cP_n)$ 
converges to partition into points). Theorem \ref{main} follows by the following 
theorem:
{
\begin{theorem}\label{main2} Let $T$ be weakly mixing and $(T,S)\in \mathcal{L}^{\gamma,\gamma',\delta}$ with $\delta<\gamma$. There exists $m_0$ such that the process  $(T\times 
S,\cP_{m_0})$ is not LB.
\end{theorem}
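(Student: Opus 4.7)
We argue by contradiction: pick $m_0\in\N$ (to be specified) and assume that $(T\times S,\cP_{m_0})$ is loosely Bernoulli. For small $\eps>0$ (compared to $\gamma-\delta$), Definition~\ref{def:LB} yields a set $A_\eps\subset\cT_\infty^T\times\cT_\infty^S$ of measure $>1-\eps$ whose elements have pairwise $\bar f$-close $\cP_{m_0}$-names over long time windows. Since $\cP_{m_0}=\cQ_{m_0}\times\cR_{m_0}$, any $\bar f$-matching between the $\cP_{m_0}$-names of two points $(x_1,x_2),(y_1,y_2)\in A_\eps$ projects to two \emph{time-synchronized} matchings of density $\geq 1-\eps$: one between the $\cQ_{m_0}$-names of $x_1,y_1$ under $T$ and one between the $\cR_{m_0}$-names of $x_2,y_2$ under $S$, sharing exactly the same index pairs $(i_s,j_s)$. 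We work in windows $[0,N)$ of length $N\sim h_n^T$ for large $n$, and aim to show that these joint constraints cannot be satisfied on a positive-measure subset of $A_\eps\times A_\eps$.

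The first main step uses the rigidity of the class $\cC_{\gamma,\gamma'}$. For typical $x_1\in\cT_n^T$, the $\cQ_{m_0}$-name over a window of length $\sim h_n^T$ records the exact placement of the spacer blocks $a_{n,1}<a_{n,2}<\cdots<a_{n,p_n^T}$ along the orbit inside $\cT_{n+1}^T$. Because these spacers are strictly increasing with total bound $a_{n,p_n^T}\leq (h_n^T)^{\gamma'}$, a matching of density $\geq 1-\eps$ with the $\cQ_{m_0}$-name of $y_1$ can occur only when the associated time shift $\tau$ is within error $\ll h_n^T/p_n^T$ of an integer multiple of $h_n^T$. Thus the set of admissible $T$-shifts has cardinality $\lesssim p_n^T\leq (h_n^T)^{\gamma'}$, and for each admissible shift the position of $y_1$ relative to $x_1$ inside $\cT_n^T$ is tightly pinned.

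The second main step invokes the $\delta$-alternating hypothesis. Let $m=m(n)$ satisfy $h_m^S\leq h_n^T<h_{m+1}^S$; then $(h_m^S)^{1+\delta}<h_n^T<(h_{m+1}^S)^{1/(1+\delta)}$. Over the window $[0,N)$, the $S$-orbit of $x_2$ traverses $\gtrsim (h_m^S)^\delta$ full copies of $\cT_m^S$, all within a single copy of $\cT_{m+1}^S$. Applying Step~1 to $S$, any synchronized shift $\tau$ compatible with a good $\cR_{m_0}$-matching must be close, up to error $\ll h_m^S/p_m^S$, to an integer multiple of $h_m^S$. But the admissible $\tau$ from the $T$-side are close to multiples of $h_n^T$, and since $h_n^T/h_m^S>(h_m^S)^\delta$ is large and the quantization moduli are mismatched, only $\tau=0$ can obey both constraints. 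Weak mixing of $T$ enters here to exclude exceptional alignments: it decorrelates the position of $x_1$ in $\cT_n^T$ from that of $x_2$ in $\cT_m^S$, so that the set of pairs compatible with any non-trivial joint shift has negligible measure as $n\to\infty$.

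The hard part will be making Step~1 quantitative: controlling precisely how the density of the best $\bar f$-matching drops as $\tau$ deviates from an integer multiple of $h_n^T$, and in particular bounding the density loss contributed by each misaligned spacer transition inside $\cT_{n+1}^T$. This is where the hypothesis $\delta<\gamma$ is crucial, since it guarantees that the tolerance $\sim (h_m^S)^{1-\gamma}$ permitted on the $S$-side is strictly smaller than the quantum $\sim (h_n^T)^{1-\gamma'}$ forced on the $T$-side, yielding the required mismatch between the two systems of admissible shifts. Once Steps~1 and~2 are quantified and weak mixing of $T$ is invoked to decorrelate the two coordinates, counting the pairs in $A_\eps\times A_\eps$ compatible with any admissible $\tau$ yields a total measure contradicting $\mu(A_\eps)^2>(1-\eps)^2$, completing the proof.
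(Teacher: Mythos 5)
Your outer contradiction scheme (assume LB, take $A_\eps$, find two points whose names stay $\bar f$-far apart) matches the paper's, but the core of your argument has a genuine gap: in Step~1 you speak of ``the associated time shift $\tau$'' of a density-$(1-\eps)$ $\bar f$-matching and claim such a matching forces $\tau$ to be quantized near multiples of $h_n^T$. The $\bar f$ metric allows arbitrary monotone re-synchronization along the orbit, so a matching carries no global shift; worse, each rank one factor is itself loosely Bernoulli (all finite rank systems are), so the $\cQ_{m_0}$-names of two typical points of $T$ alone admit matchings of density tending to $1$ with no rigidity whatsoever. Hence no shift-quantization can be extracted coordinate-wise, and the ``mismatched quantization moduli'' comparison of Step~2 (and the final count of pairs compatible with an admissible $\tau$) has no foundation. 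The decorrelation role you assign to weak mixing is likewise not something the argument can lean on: the obstruction to LB must come from the interplay of the two coordinates inside a single arbitrary matching, not from independence of positions.

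The missing idea, which is exactly what the paper supplies, is a mechanism that controls \emph{arbitrary} matchings: stratify the matched indices by the dyadic scale of $\max\bigl(d_1(T^{i_s}x,T^{j_s}x'),d_2(S^{i_s}y,S^{j_s}y')\bigr)$ (the sets $A_\theta^k$), and work locally in windows of length $(h_n^T)^{1+2\xi}$ where, by $\delta$-alternation, the relevant $S$-height is far away; there the $S$-coordinate forces the matching to be diagonal, $i_w-i_s=j_w-j_s$ (via \eqref{unif:dist}), while the monotone spacers of $T\in\cC_{\gamma,\gamma'}$ force any diagonal shift of size in $[h_n^T,(h_n^T)^{1+2\xi}]$ to leave the scale (via \eqref{short:bl}). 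A purely combinatorial covering lemma (Lemma \ref{comb}) then converts this local structure into the bound $|A_\theta^k|\le 4N/2^{k\xi}\le N/k^2$, and summing over $k$ gives $\bar f\ge 1/100$ for a positive-measure family of pairs, which feeds into your contradiction with $A_\eps$. Without the scale decomposition, the locally-diagonal rigidity supplied by the \emph{other} coordinate, and the covering lemma, the proposal cannot rule out matchings that repeatedly re-synchronize, which is precisely the difficulty in proving non-loose-Bernoullicity.
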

}

In the following subsections we will state some lemmas which will help us in 
proving Theorem \ref{main2}.
\section{Preliminary lemmas}
\subsection{A combinatorial lemma: lower bound on $\bar{f}$}
\label{sec:BegEnd}
In this section we state a general combinatorial lemma which allows to give a 
lower bound on the $\bar{f}$ distance. Let $(i_s)_{s=1}^r$ and $(j_s)_{s=1}^r$ 
be two increasing sequences of positive integers in $[0,N]$. 

For $M\leq N$ and $1\leq w<r$ define
$$
I(M,w)=\{s\in\{1,\ldots r\}\;:\; i_s\in [i_w,i_w+M]\}\subset [0,N]
$$
and
$$
J(M,w)=\{s\in\{1,\ldots r\}\;:\; j_s\in [j_w,j_w+M]\}\subset [0,N].
$$ 
The following property of $I(M,w)$ and $J(M,w)$ is straightforward and will 
be useful in the proof of the lemma below:
{If $s>w$ and} $s\notin I(M,w)$, then 
\be\label{begemptyset}
I(M,w)\cap I(M,s)=\emptyset,
\ee
and analogously for $J(M,w)$.
We have the following lemma:

\begin{lemma}\label{comb} Let $\xi\in(0,1)$. Let $(i_s)_{s=1}^r\in [0,N]^r$ and 
$(j_s)_{s=1}^r\in [0,N]^r$ be two increasing sequences of integers for which 
there exists a number $K\in \N$, $8K^{1+\xi}\leq N$ such that for every 
$s=1,\ldots,r$
\begin{equation}\label{beend}
\min\left(|I(K^{1+\xi},s)|,|J(K^{1+\xi},s)|\right)\leq 2K,
\end{equation}
then $r<\frac{4N}{K^{\xi}}$.
%then $r<\frac{8N}{K^{\xi}}$.
\end{lemma}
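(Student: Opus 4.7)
The plan is to partition the index set $\{1,\ldots,r\}$ into
\[
A := \{s : |I(K^{1+\xi}, s)| \leq 2K\} \quad \text{and} \quad B := \{s : |J(K^{1+\xi}, s)| \leq 2K\},
\]
which together cover $\{1,\ldots,r\}$ by the hypothesis \eqref{beend}. It then suffices to bound each of $|A|$ and $|B|$ by roughly $2N/K^\xi$; the two estimates are symmetric, so I will focus on $A$ and use a greedy covering argument driven by property \eqref{begemptyset}.

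Starting from $w_1 := \min A$, I would inductively pick $w_{\ell+1}$ to be the smallest element of $A$ with $i_{w_{\ell+1}} > i_{w_\ell} + K^{1+\xi}$, stopping at the largest index $L$ for which this is possible. Since $w_{\ell+1} > w_\ell$ and $w_{\ell+1} \notin I(K^{1+\xi}, w_\ell)$, property \eqref{begemptyset} gives pairwise disjointness of the sets $I(K^{1+\xi}, w_1),\ldots,I(K^{1+\xi}, w_L)$ (for non-consecutive pairs one applies \eqref{begemptyset} after noting that $i_{w_{\ell'}} - i_{w_\ell} > K^{1+\xi}$ whenever $\ell' > \ell$). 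Moreover, the minimality of each $w_{\ell+1}$ together with the maximality of $L$ forces every $s \in A$ into some $I(K^{1+\xi}, w_\ell)$: if $w_\ell \leq s < w_{\ell+1}$ then $i_s \leq i_{w_\ell} + K^{1+\xi}$ by minimality of $w_{\ell+1}$, and any $s \in A$ beyond $w_L$ would violate the stopping condition.

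Combining, $|A| \leq \sum_{\ell=1}^L |I(K^{1+\xi}, w_\ell)| \leq 2KL$. The chosen values $i_{w_1} < \cdots < i_{w_L}$ are spaced by strictly more than $K^{1+\xi}$ inside $[0,N]$, so $L \leq N/K^{1+\xi} + 1$, and hence $|A| \leq 2N/K^\xi + 2K$; the symmetric argument yields the same bound for $|B|$. Summing, $r \leq 4N/K^\xi + 4K$, and the hypothesis $8K^{1+\xi} \leq N$ lets us absorb the $4K$ term into the leading one (via $4K \leq N/(2K^\xi)$), delivering the stated inequality up to at most a negligible loss in the constant.

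The main point requiring care is the coverage claim, i.e.\ that every element of $A$ actually sits in one of the $I(K^{1+\xi}, w_\ell)$. The disjointness and the per-set size bound are essentially immediate from \eqref{begemptyset} and from membership in $A$, so the whole argument hinges on this coverage step and the resulting telescoping of lengths; once that is nailed down, the rest is arithmetic.
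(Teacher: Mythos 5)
Your argument is correct in substance, and it organizes the covering differently from the paper. The paper runs a single greedy pass over $\{1,\ldots,r\}$: at each still-uncovered index it adjoins whichever of $I(K^{1+\xi},\cdot)$, $J(K^{1+\xi},\cdot)$ is smaller, so it covers everything in one sweep, but it then needs a pigeonhole step to extract at least half of the chosen sets of a single type, whose disjointness (via \eqref{begemptyset}) gives the interval count. You instead split the indices first into $A$ (where the $I$-set is small) and $B$ (where the $J$-set is small), which \eqref{beend} guarantees cover $\{1,\ldots,r\}$, and run a single-type greedy selection inside each class; disjointness of the selected sets is then immediate and no pigeonhole is needed. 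Your coverage step — the one you rightly single out — is sound: monotonicity of $(i_s)$ together with the minimality of each $w_{\ell+1}$ and the stopping rule forces every $s\in A$ into some $I(K^{1+\xi},w_\ell)$. Quantitatively the two routes are the same: both give $r\le 4N/K^{\xi}+4K$, since the last selected interval may protrude beyond $N$. The only blemish is the one you flag yourself: absorbing $4K$ via $8K^{1+\xi}\le N$ yields $r\le \tfrac{9N}{2K^{\xi}}$ rather than the stated $r<\tfrac{4N}{K^{\xi}}$. This is not a genuine gap — the paper's own step $\tfrac v2\le \tfrac N{K^{1+\xi}}$ silently ignores the same boundary interval, and in the application (the proof of $(P2)$) only a bound of order $N/K^{\xi}$ is used, as it is subsequently dominated by $N/k^2$ for large $k$ — but to prove the lemma literally as stated you would need either to tighten the interval count or to restate the conclusion with a slightly larger constant.
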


\begin{proof}
Set $s_1:=1$. 
If $|I(K^{1+\xi},s_1)|\geq |J(K^{1+\xi},s_1)|$, we set 
$B_1:=J(K^{1+\xi},s_1)$ otherwise,  we set $B_1:=I(K^{1+\xi},s_1)$. By \eqref{beend}, we have  
$$
|B_1|\leq 2K.
$$
We then proceed inductively: assume that for some $\ell\ge1$
we have constructed $1=s_1<s_2<\cdots<s_\ell\le r$ and subsets $B_1,\ldots,B_\ell$ of $\{1,\dots,r\}$.
If $B_1\cup\cdots\cup B_\ell=\{1,\dots,r\}$, we stop here and set $v:=\ell$. Otherwise, we define 
$s_{\ell+1}$ as the smallest element of $\{1,\dots,r\}\setminus B_1\cup\cdots\cup B_\ell$, and $B_{\ell+1}$ as
either $I(K^{1+\xi},s_{\ell+1})$ or $J(K^{1+\xi},s_{\ell+1})$ (by choosing the set with the smallest cardinal).
By \eqref{beend}, we always have  
$$
|B_{\ell+1}|\leq 2K.
$$

We go on in this way until we obtain a finite sequence of set $\{B_i\}_{i=1}^v$, such that 
\be\label{eq:cover}
B_1\cup\cdots\cup B_v=\{1,\dots,r\},
\ee
and such that 
\be\label{eq:card}
|B_\ell|=\min(|I(K^{1+\xi},s_\ell)|,|J(K^{1+\xi},s_\ell)|)\leq 2K\text{ for every } 
\ell\in\{1,\ldots,v\}.
\ee

% Since 
% $8K^{1+\xi}\leq N$ we are able to make at least one step of the construction 
% (i.e. $v\geq 2$). By \eqref{beend} for $s=s_i$, $i=1,\ldots, v$ it follows that
% that
% \be\label{eq:card}
% |B_i|=\min(|I(K^{1+\xi},s_i)|,|J(K^{1+\xi},s_i)|)\leq 2K\text{ for every } 
% i\in\{1,\ldots,v\}.
% \ee
Moreover, if $1\le \ell_1<\ell_2 \le v$, then $s_{\ell_2}\notin B_{\ell_1}$. If we further
assume that $B_{\ell_h}=I(K^{1+\xi},s_{\ell_h})$ for $h\in\{1,2\}$ (or 
$B_{\ell_h}=J(K^{1+\xi},s_{\ell_h})$ for $h\in\{1,2\}$), then by construction (see also 
\eqref{begemptyset})
\be\label{newemp}
B_{\ell_1}\cap B_{\ell_2}=\emptyset.
\ee
By pigeonhole principle, there exists a set $A\subset 
\{1,\ldots,v\}$, with $|A|\geq \frac{v}{2}$ and such that either for every $\ell\in A$, 
$B_\ell=I(K^{1+\xi},s_\ell)$, or for every $\ell\in A$, $B_\ell=J(K^{1+\xi},s_\ell)$. 
By \eqref{newemp}, the subsets $B_\ell$, $\ell\in A$ correspond to disjoint subintervals of $[0,N]$,
each of them of length $K^{1+\xi}$. We thus get 
\begin{equation}\label{smv}
\frac{v}{2} \leq \frac{N}{K^{1+\xi}}.
\end{equation}
Now, \eqref{eq:cover}, \eqref{eq:card} and \eqref{smv} yield
\[
 r \le |B_1|+\cdots+|B_v| \le 2Kv \le \frac{4N}{K^\xi}.
\]
\end{proof}

\subsection{Proof of Lemma \ref{lem:net}}\label{sec:next}

%\begin{lemma}\label{lem:WM}
%Let $T\in \cC_{\gamma,\gamma'}$ be a staircase rank one transformation. Then $T$ is weakly mixing.
%\end{lemma}

%\begin{proof}[Proof of Lemma \ref{lem:WM}:]
% First, we use the classical 
%result that a staircase rank one transformation is always mixing on the sequence $(h_n)$, i.e. for all $f,g\in L^2(\mu_T)$, 
%$\int f   (g\circ T^{h_n}) \, d\mu_T \to (\int f\, d\mu_T) (\int g\, d\mu_T)$. Indeed, it is enough to prove this when 
%$f$ and $g$ are characteristic functions of finite unions of levels of some tower in the construction, and in this case 
%this follows from the structure of the spacers above the towers, and the fact that, by ergodicity, 
%\[
 %\frac{1}{p_n}\sum_{i=1}^{p_n} \int f (g\circ T^i) \, d\mu_T \to \left(\int f\, d\mu_T\right) \left(\int g\, d\mu_T\right).
%\]
%(See Theorem~3 in \cite{CreutzSilva2004} for details.) 
%But then, if  $\lambda$ is an eigenvalue of $T$ and if $f$ is a corresponding eigenvector, we have on the one hand
%\[
 %\left|\int f \bar f\circ T^{h_n} \, d\mu_T\right| = |\lambda^{h_n}| \int |f|^2  \, d\mu_T\ = \int |f|^2  \, d\mu_T,
%\]
%and on the other hand,
%\[
 %\left|\int f \bar f\circ T^{h_n} \, d\mu_T\right| \to \left| \int f\, d\mu_T \right|^2.
%\]
%It follows that $f$ is constant and $\lambda=1$. This proves that $T$ is weakly mixing.
%\end{proof}

%Now we will prove Lemma \ref{lem:net}:

\begin{proof}[Proof of Lemma \ref{lem:net}:] Let $(h_n^T)$ denote the sequence 
of heights for $T$. We will construct a {staircase }$S\in C_{\gamma/3,3\gamma'}$ such that 
$(T,S)\in \cL^{\gamma/3,3\gamma',\gamma/4}$, \textit{i.e.} the sequences $(h_n^T)$ and 
$(h_n^S)$ are $\gamma/4$- alternating. %We will define the sequences $(p_n^S)$and $(a^S_{n,i})_{i=1}$, $n\in \N$. It is enough to show that 
For this, we will show that there exists $n_0$ 
such that for every $n\geq n_0$, we have
\be\label{eq:heightsinde}
(h_{n-1}^S)^{1+\gamma/4}<h_n^T\;\;\text{ and }\;\; 
(h_{n}^T)^{1+\gamma/4}<h_{n}^S.
\ee

Fix $\eta<1/100$. Let $K_T$ denote the constant $K$ for $T$ in \eqref{eq:bhe}. We will first construct $(p_i^S)_{i=1}^{+\infty}$ so that for $n\geq n_0$, we have 
\be\label{eq:conp}
\left(\prod_{i=1}^{n-2}p_i^S\right)^{1+(\frac{1}{4}+\eta)\gamma}<\prod_{i=1}^{n-1}
p_i^T\;\;\text{ and }\;\; \left(K_T 
\prod_{i=1}^{n-1}p_i^T\right)^{1+\gamma/4}<\prod_{i=1}^{n-1}p_i^S.
\ee
Then we will show how to derive~\eqref{eq:heightsinde} from~\eqref{eq:conp}.

Since $T\in \cC_{\gamma,\gamma'}$ it follows that for $n > n'_T$, by 
\eqref{eq:bhe}, we have 
\be\label{eq:enT}
K_T\prod_{i=1}^{n}p_i^T\geq h_{n}^T\geq (h_{n-1}^T)^{1+\gamma}\geq 
(\prod_{i=1}^{n-1}p_i^T)^{1+\gamma}.
\ee

Set $p^S_n=2$ for every $n\in\{1,...,n'_T+1\}$ and $a_{n,i}^S=0$ for every 
$i\in\{1,2\}$ and $n\in\{1,...,n'_T+1\}$. Then (by enlarging $n'_T$ if 
neccesary), we have
$$
\left(\prod_{i=1}^{n'_T+1}p_i^S\right)^{1+(\frac{1}{4}+\eta)\gamma}=( 
2^{n'_T+1})^{1+(\frac{1}{4}+\eta)\gamma}\leq \left(\frac{1}{K_T}2^{n'_T+1}\right)^{1+\gamma}\leq 
\prod_{i=1}^{n'_T+2}p_i^T.
$$
(The last inequality by \eqref{eq:enT} and since $p_n^T\geq 2$.) So the left
inequality in \eqref{eq:conp} holds for $n=n'_T+3$. We then proceed 
inductively: having defined $(p_{i}^S)_{i=1}^{w}$ so that the left inequality 
in \eqref{eq:conp} holds for $n=w+2$, we choose $p_{w+1}^S\in \N$ so that
\be\label{eq:loint}
\left(\prod_{i=1}^{w}p_i^S\right)p_{w+1}^S\in \left[\left(K_T 
\prod_{i=1}^{w+1}p_i^T\right)^{1+\gamma/4}, 
\left(\prod_{i=1}^{w+2}p_i^T\right)^{\frac{1}{1+(\frac{1}{4}+\eta)\gamma}}\right].
\ee
We explain below why such a choice is always possible.
Notice that by \eqref{eq:enT} for $n=w+2$, we have 
\begin{multline*}
\left| \left(\prod_{i=1}^{w+2}p_i^T\right)^{\frac{1}{1+(\frac{1}{4}+\eta)\gamma/4}}-\left(K_T 
\prod_{i=1}^{w+1}p_i^T\right)^{1+\gamma/4}\right|\geq \\\left|
\left(\frac{1}{K_T}\prod_{i=1}^{w+1}p_i^T\right)^{\frac{1+\gamma}{1+(\frac{1}{4}+\eta)\gamma}}-\left(K_T 
\prod_{i=1}^{w+1}p_i^T\right)^{1+\gamma/4}\right|\geq 4\prod_{i=1}^{w+1}p_i^T,
\end{multline*}
the last inequality by $\frac{1+\gamma}{1+(\frac{1}{4}+\eta)\gamma}\geq 1+\gamma/4$ (recall also that $K_T$ is a fixed constant and $w\geq n'_T+3$ and $n'_T$ can be made sufficiently large with respect to $K_T$.) Moreover, since we assume that the left inequality in \eqref{eq:conp} holds for $w+2$, it follows that $4\prod_{i=1}^{w+1}p_i^T\geq  4\prod_{i=1}^{w}p_i^S$. Therefore the length of the interval on the right of \eqref{eq:loint} is at least $4\prod_{i=1}^{w}p_i^S$, and so such $p_{w+1}^S\geq 2$ always exists. Recursively, it follows that \eqref{eq:conp} holds for $n\ge n_0:=n'_T+3$. 

{To guarantee that $S$ is a staircase rank one system we set $a^S_{n,i}=i$ for $n\in \N$ and $i\in\{1,..., p_n\}$}. It remains to show that $S\in \cC_{\gamma/3,3\gamma'}$ and, using \eqref{eq:conp}, that \eqref{eq:heightsinde} holds.
Notice that by \eqref{eq:conp}, for $w
\geq n_0$, we have 
\be\label{eq:newqw}
\frac{p^S_w}{\prod_{i=1}^{w-1}p^S_i}\leq \frac{\prod_{i=1}^{w+1}p^T_i}{\left(\prod_{i=1}^{w-1}p^S_i)\right)^2}\leq
 \frac{\prod_{i=1}^{w+1}p^T_i}{\left(\prod_{i=1}^{w-1}p^T_i\right)^2}=\frac{p_w^Tp_{w+1}^T}{\prod_{i=1}^{w-1}p^T_i}.
\ee
Since $T\in \cC_{\gamma,\gamma'}$ with $0<\gamma<\gamma'<1/3$ and by \eqref{eq:bhe}, we get   
\be
\label{eq:newqw2}
\frac{p_w^Tp_{w+1}^T}{\prod_{i=1}^{w-1}p^T_i}
\leq \frac{h_w^{\gamma'}h_{w+1}^{\gamma'}}{\prod_{i=1}^{w-1}p^T_i}
\leq \frac{K_T^2 \left(\prod_{i=1}^{w-1}p^T_i\right)^{2\gamma'}(p^T_w)^{\gamma'}}{\prod_{i=1}^{w-1}p^T_i}
\leq \frac{K_T^3 \left(\prod_{i=1}^{w-1}p^T_i\right)^{3\gamma'}}{\prod_{i=1}^{w-1}p^T_i}.
\ee
Notice that \eqref{eq:newqw} and \eqref{eq:newqw2} together with the definition of spacers for $S$ and $\gamma'<1/3$ imply that 
\begin{align*}
\sum_{n=1}^{+\infty}\frac{1}{\prod_{i=1}^np^S_i}\left(\sum_{i=1}^{p_n}a^S_{n,i}\right)
& \leq \sum_{n=1}^{+\infty}\frac{(p_n^S)^2}{\prod_{i=1}^{n}p^S_i} \\
& \leq \sum_{n=1}^{+\infty}\frac{p_n^S}{\prod_{i=1}^{n-1}p^S_i}<+\infty,
\end{align*}
and hence \eqref{eq:bhe} holds for $S$ (with constant $K_S$). Then, \eqref{eq:conp} and \eqref{eq:bhe} (for $T$ and for $S$) ensure the validity of \eqref{eq:heightsinde} for $n$ large enough. This proves that $(h_n^T)$ and $(h_n^S)$ are $\gamma/4$- alternating.

Now it remains to check that $S\in C_{\gamma/3,3\gamma'}$.
For this, we will use the following inequality: for $n$ large enough, using several times \eqref{eq:bhe} and the fact that $T\in C_{\gamma,\gamma'}$, we get
\begin{align*}
 h_{n+2}^T 
 & \le K_T \prod_{i=1}^{n+1} p_i^T \\ %\quad\text{by \eqref{eq:bhe}}
 & = K_T \, p_{n+1}^T \,  p_n^T \prod_{i=1}^{n-1} p_i^T \\
 & \le K_T \, p_{n+1}^T \,  p_n^T \, h_n^T \\ %\quad\text{again by \eqref{eq:bhe}}
 & \le K_T \, (h_{n+1}^T)^{\gamma'} \, (h_n^T)^{1+\gamma'} \\ %\quad\text{as }T\in C_{\gamma,\gamma'} 
 & \le  K_T  \,( K_T \, p_n^T\, h_n^T) ^{\gamma'} \, (h_n^T)^{1+\gamma'} \\
 & \le K_T^2  \, \bigl((h_n^T)^{1+\gamma'}\bigr)^{\gamma'} \, (h_n^T)^{1+\gamma'} \\
 & = K_T^2  \, (h_n^T)^{1+2\gamma'+\gamma'^2} \\
 & \le (h_n^T)^{1+3\gamma'}.
\end{align*}
By  \eqref{eq:heightsinde}, it follows that for $n$ large enough, we have
$$
\frac{h_{n+1}^S}{h_n^S}
\leq \frac{(h_{n+2}^T)^{\frac{1}{1+\gamma/4}}}{(h_n^T)^{1+\gamma/4}}
\leq \frac{h_{n+2}^T}{h_n^T}
\leq (h_n^T)^{3\gamma'}\leq (h_n^S)^{3\gamma'}.
$$
On the other hand, again by \eqref{eq:heightsinde}
$$
\frac{h_{n+1}^S}{h_n^S}\geq 
\frac{(h_{n+1}^T)^{1+\gamma/4}}{(h_{n+1}^T)^{\frac{1}{1+\gamma/4}}}\geq 
(h_{n+1}^T)^{\frac{(1+\gamma/4)^2-1}{1+\gamma/4}}\geq (h_{n}^S)^{\gamma/2}.
$$
Therefore and by \eqref{eq:bhe}, for $n$ large enough, we have 
\be\label{eq:pns}
p_n^S\in \left[\frac{h_{n+1}^S}{h_n^S}, \frac{K_S \, h_{n+1}^S}{ h_n^S}\right]\subset 
 \left[(h_n^S)^{\gamma/3},(h_n^S)^{3\gamma'}\right].
\ee
Using \eqref{eq:pns} and the definition of spacers for $S$ ($S$ is a staircase transformation) we get that 
$S\in \cC_{\gamma/3,3\gamma'}$.  This finishes the proof.
\end{proof}

\subsection{Distribution of points for maps from 
$\cC_{\gamma,\gamma'}$}\label{sec:distest}
In this section we will do quantitative estimates on recurrence of points. Fix 
$G\in \cC(\gamma,\gamma')$. Since $G$ will be fixed throughout this section, we 
drop the superscript $G$ and denote by $\cT_n$, $B_n$, $h_n$ 
and $p_n$ the tower, base, height and the number of cuts at stage $n$ (recall 
that $p_n\in [h_n^{\gamma},h_n^{\gamma'}]$ for $n\geq n'_G$). For $i\in 
\{1,\ldots, p_n\}$, let $\cT_{n,i}\subset \cT_n$ denote the column over the 
$i$-th cut.  

We will construct some subsets of $\cT_\infty$, on which we control the dynamics 
well. First, we cut off the ``boundaries'' of $\cT_n$: let 
\be\label{eq:fng}
F_n^G:=\left(\bigcup_{i=\left\lfloor\frac{h_n}{n^2}\right\rfloor}^{h_n-\left\lfloor\frac{h_n}{n^2}\right\rfloor}G^i(B_n)\right)
\cap\left(\bigcup_{i=h_n^{\gamma/4}}^{p_n-h_n^{\gamma/4}}\cT_{n,i}\right).
\ee

Notice that since $p_n\geq h_n^\gamma$ for $n\geq n'_G$ and every column has 
equal measure, we have $\mu_G(F_n^G)\geq 1-\frac{4}{n^2}$ for $n$ large enough. Let
\begin{equation}\label{fg}
F^G:=\bigcap_{i\geq n_1}F_i^G,
\end{equation}
where $n_1$ is such that $\mu(F^G)\geq 1-10^{-5}$. In particular, if $x\in F^G$, 
then $x\in \cT_{n_1}$. This will be used in the statement of the lemma below. 
%(see \eqref{eq:disthor} for $n=n_1$).

\begin{lemma}\label{lem:1}Fix $0<\xi\le 100^{-1}\min(\gamma,1-\gamma')$. There 
exists $m_1\in \N$ such that for every $n\geq m_1$ the following holds: for 
every $x,x'\in F^G$ such that $d_H(x,x')=\frac{1}{h_n}$ (see 
\eqref{eq:disthor}), for every $r\in\{h_n,h_{n}+1,\ldots,(h_n)^{1+2\xi}\}$ for 
which $G^rx,G^rx'\in F^G$, we have
\begin{equation}\label{short:bl}
d_H(G^r(x),G^r(x'))\geq \frac{1}{h_{n-1}}.
\end{equation}
Moreover, for every $x,x'\in F^G$ such that $d_H(x,x')\leq \frac{1}{h_n}$ and 
for every $i,j\in\{0,...\frac{h_n}{n^3}\}$, $i\neq j$, we have
\begin{equation}\label{unif:dist}
d_H(G^i(x),G^j(x'))\geq \frac{1}{h_{n-1}}.
\end{equation}
\end{lemma}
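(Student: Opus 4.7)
\textbf{Plan for Lemma \ref{lem:1}.} I will work inside the tower $\cT_{n+1}$, where $G$ acts by moving every point one level up. Because $\xi \leq 100^{-1}\min(\gamma, 1-\gamma')$, one has $h_n^{1+2\xi} < h_{n+1}$ for $n$ large, and the defining property of $F^G_{n+1}$ keeps each point of $F^G$ well away from the top of $\cT_{n+1}$; so for any $x \in F^G$ the whole orbit segment $x, Gx, \ldots, G^r x$ stays inside $\cT_{n+1}$ (and analogously for $x'$). Consequently the $\cT_{n+1}$-level of $G^s y$ is simply the $\cT_{n+1}$-level of $y$ shifted by $s$. Since $d_H(\cdot,\cdot) \geq 1/h_{n-1}$ is equivalent to the two points not sharing a $\cT_n$-level, both claims reduce to level-arithmetic inside $\cT_{n+1}$.

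For the first claim, write $x, x'$ at the common $\cT_n$-level $\ell$ but in distinct columns $\cT_{n,i}, \cT_{n,j}$ of $\cT_n$ inside $\cT_{n+1}$ (WLOG $i < j$). Since $r \geq h_n$ and $G^r x, G^r x' \in F^G \subset \cT_n$ (hence not in any spacer), the iterate $G^r x$ lies in some column $\cT_{n, i+a}$ and $G^r x'$ in $\cT_{n, j+b}$ with $a, b \geq 1$ determined by $r$. If $G^r x$ and $G^r x'$ were to share a $\cT_n$-level, subtracting the two explicit position formulas inside $\cT_{n+1}$ would yield
\[
(a - b)\, h_n \;=\; \sum_{k=j}^{j+b-1} a_{n,k} \;-\; \sum_{k=i}^{i+a-1} a_{n,k}.
\]
The bounds $a, b \leq 2 h_n^{2\xi}$ (from $r \leq h_n^{1+2\xi}$) and $a_{n,k} \leq h_n^{\gamma'}$ (from $G \in \cC_{\gamma, \gamma'}$) force the right-hand side to have modulus at most $4 h_n^{2\xi + \gamma'}$, which is strictly less than $h_n$ for $n$ large because $2\xi + \gamma' < 1$; this forces $a = b$. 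The residual equality $\sum_{k=i}^{i+a-1} a_{n,k} = \sum_{k=j}^{j+a-1} a_{n,k}$ then contradicts the strict monotonicity $a_{n,k} > a_{n,k-1}$ built into the class $\cC_{\gamma,\gamma'}$, since $i < j$ and $a \geq 1$. This gives the required separation.

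For the second claim, $d_H(x,x') \leq 1/h_n$ already places $x, x'$ at a common $\cT_n$-level $\ell$, and $x \in F_n^G$ forces $\ell \in [h_n/n^2,\, h_n - h_n/n^2]$. Since $i, j \in [0, h_n/n^3]$ with $i \neq j$, the iterates $G^i x$ and $G^j x'$ remain in $\cT_n$ at $\cT_n$-levels $\ell + i$ and $\ell + j$, which differ; so the two points do not share a $\cT_n$-level, as required. This second part is essentially immediate from the tower structure and the defining properties of $F_n^G$. The main technical obstacle lies entirely in the first claim: it uses in an essential way both the polynomial spacer bound $a_{n,k} \leq h_n^{\gamma'}$ (to force $a = b$) and the strict monotonicity of the spacers (to rule out the residual equality once $a = b$); the choice $\xi \leq 100^{-1}(1 - \gamma')$ is precisely what produces the margin $2\xi + \gamma' < 1$ that makes the first step go through.
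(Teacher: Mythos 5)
Your proof is correct and follows essentially the same route as the paper's: both arguments reduce \eqref{short:bl} and \eqref{unif:dist} to showing the two images lie in different levels of $\cT_n$, using membership in $F^G$ to avoid the top/edge of the towers, the bound $r\le h_n^{1+2\xi}$ together with $a_{n,k}\le h_n^{\gamma'}$ and $2\xi+\gamma'<1$ to control the accumulated spacer discrepancy, and the strict monotonicity of the spacers to make that discrepancy nonzero. The only (cosmetic) difference is bookkeeping: you equate explicit $\cT_{n+1}$-position formulas and derive a contradiction from $(a-b)h_n=\sum_{k=j}^{j+b-1}a_{n,k}-\sum_{k=i}^{i+a-1}a_{n,k}$, whereas the paper directly computes the lag $w_r=\sum_{i=\ell_1}^{\ell_r-1}(a_{n,\ell_2-\ell_1+i}-a_{n,i})\in(0,h_n/n^2)$ and places $G^rx'$ at level $s-w_r\neq s$.
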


\begin{proof} 
% Notice that for every $n\in \N$, every $x\in \cT_n$ and every 
% $0< k< h_n$, $x$ and $G^kx$ are not in the same level of $\cT_n$. In 
% particular, for every $0< k<h_n$ for which $G^kx\in F^G\subset \cT_{n_1}$,  we 
% have
% \be\label{eq:low}
% d_H(x,G^kx)\geq \frac{1}{h_{n-1}}.
% \ee
Let $n\ge n_1$ be large enough so that $h_n^{\gamma'+2\xi}< h_n/n^2$, and let 
$x,x'\in F^G$ be such that $d_H(x,x')=\frac{1}{h_n}$. 
Fix $r\in\{h_n,h_{n}+1,\ldots,h_n^{1+2\xi}\}$ for which $G^rx,G^rx'\in F^G$. Let 
$0\leq \ell_1,\ell_2\leq p_n$ be such that $x\in \cT_{n,\ell_1}$ and $x'\in 
\cT_{n,\ell_2}$. Since $x,x'\in F^G\subset F_n^G$ (see \eqref{eq:fng}), $\ell_1,\ell_2\in[h_n^{\gamma/4},p_n-h_n^{\gamma/4}]$. Notice that 
$r\leq h_n^{1+2\xi}$ and the height of $\cT_n$ is $h_n$. By the choice of 
$\xi$ it follows that
\be\label{eq:notf}
 G^rx,G^rx'\notin \bigcup_{i=p_n-\frac{1}{2}h_n^{\gamma/4}}^{p_n}\cT_{n,i}.
\ee
Since $d_H(x,x')=\frac{1}{h_n}$, we have $\ell_1\neq \ell_2$ (otherwise 
$d_H(x,x')\leq \frac{1}{h_{n+1}}$). Assume WLOG that $\ell_1< \ell_2$. 
% We claim that there exists $0<w_r< h_n/2$ %(total displacement of spacers) 
% such that
% \be\label{eq:totdis}
% d_H(G^rx,G^{r+w_r}x')\leq \frac{1}{h_n}.
% \ee
% Indeed, 
Let $\ell_r\geq \ell_1$ be such that $G^rx\in \cT_{n,\ell_r}$. Notice 
that since $r\geq h_n$ and $G^rx\in F^G\subset \cT_n$, it follows that in fact 
$\ell_r>\ell_1$. Define
$$
w_r:=\sum_{i=\ell_1}^{\ell_r-1}(a_{n,\ell_2-\ell_1+i}-a_{n,i}).
$$
By $G\in C_{\gamma,\gamma'}$, using \eqref{eq:notf} we get that 
$a_{n,\ell_2-\ell_1+i}-a_{n, i}>0$ (the spacers are monotonically placed). Since 
$\ell_r>\ell_1$ it follows that $w_r>0$. Moreover, $a_{n,i}\leq h_n^{\gamma'}$ 
for $i\in \{1,\ldots,p_n\}$, hence 
$$
w_r\leq (\ell_r-\ell_1)h_n^{\gamma'}\leq 
\frac{h_n^{1+2\xi}}{h_n}h_n^{\gamma'}=h_n^{\gamma'+2\xi}< h_n/n^2.
$$

Let $s$ be such that $G^rx\in G^s B_n$. 
Since $G^rx\in F^G\subset F_n^G$, we have $s\ge h_n/n^2$, and it follows by definition of $w_r$ 
and the fact that $w_r<s$ that $G^rx'\in G^{s-w_r}B_n$. Hence $G^rx$ and $G^{r}x'$ are not 
in the same level of $\cT_n$ and \eqref{short:bl} follows.

\smallskip
For \eqref{unif:dist}, notice that $x,x'\in F^G\subset F_n^G$ (see 
\eqref{eq:fng}) and by assumptions $x,x'$ are in one level of $\cT_n$, i.e. for 
some $s\in [\frac{h_n}{n^2},h_n-\frac{h_n}{n^2}]$, $x,x'\in G^s(B_n)$. But by 
the bounds on $s$ it follows that for $u\in \{i,j\}$, we have $G^u(x)\in 
G^{s+u}(B_n)$ (and $0\leq s+u\leq h_n$). Since $i\neq j$, we have $s+i\neq s+j$ 
and hence $G^i(x)$ and $G^j(x')$ are in different levels of $\cT_n$. This gives 
\eqref{unif:dist}. 
\end{proof}

For $n\in \N$ and $x\in \cT_n$ let $i_{n,x}\in\{0,\ldots, h_n-1\}$ be such that 
$x\in G^{i_{n,x}}(B_n)$. Define
\begin{equation}\label{dgn}
D_G^{x,n}:=\cT_n\setminus 
\bigcup_{k=-\left\lfloor\frac{h_n}{n^2}\right\rfloor}^{\left\lfloor\frac{h_n}{n^2}\right\rfloor}G^{k+i_{n,x}}(B_n)
\end{equation}
%  In other words, for every $-\frac{h_n}{n^2}\leq k\leq\frac{h_n}{n^2}$, $T^kz$ 
% and $x$ are not in one level of $\cT_n$. 
Notice that, since $G\in C_{\gamma,\gamma'}$, we have $\mu(\cT_n) \ge 1-{\rm 
O}(n^{-2})$, hence $\mu(D_G^{x,n})\geq 1-{\rm 
O}(n^{-2})$.
We define
\begin{equation}\label{dx}
D^G_x:=\bigcap_{n\geq n_3}D_G^{x,n},
\end{equation}
where $n_3$ is such that for all $x$, $\mu(D^G_x)\geq 1-10^{-5}$.

\begin{lemma}\label{lem:2}There exists $m_2\in \N$ such that for every $x\in 
F^G$ (see \eqref{fg}), every $x'\in D^G_x$ every $n\geq m_2$  and  every 
$i,j\in\left\{0,...,\left\lfloor\frac{h_{n+1}}{(n+1)^2}\right\rfloor\right\}$ for which $G^ix\in \cT_n$, we have 
\begin{equation}\label{ffo}
d_H(G^i(x),G^j(x'))\geq \frac{1}{h_n}.
\end{equation}
\end{lemma}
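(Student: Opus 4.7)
To prove the inequality $d_H(G^i(x), G^j(x')) \geq 1/h_n$, I would use the definition of $d_H$ to reduce it to the equivalent combinatorial statement: $G^i(x)$ and $G^j(x')$ do not lie in a common level of $\cT_{n+1}$. Writing $\phi(y) := i_{n+1,y}$ for the level of a point $y \in \cT_{n+1}$, the hypotheses provide two key quantitative inputs. From $x \in F^G \subset F^G_{n+1}$ one gets $\phi(x) \in [\lfloor h_{n+1}/(n+1)^2\rfloor,\, h_{n+1} - \lfloor h_{n+1}/(n+1)^2 \rfloor]$, and from $x' \in D^G_x \subset D_G^{x,n+1}$ one gets $|\phi(x') - \phi(x)| > \lfloor h_{n+1}/(n+1)^2 \rfloor$. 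The threshold $m_2$ is chosen large enough to satisfy $n+1 \geq \max(n_1, n_3)$ and the small-order numerical comparisons below.

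Locating $G^i(x)$ is immediate: since $\phi(x) + i \leq h_{n+1}$ and the hypothesis $G^i(x) \in \cT_n \subset \cT_{n+1}$ rules out the boundary equality, the orbit $x, \dots, G^i x$ makes no wraparound inside $\cT_{n+1}$ and $\phi(G^i x) = \phi(x) + i$. Locating $G^j(x')$ splits naturally into the case $\phi(x') + j \leq h_{n+1} - 1$ (no wraparound) and the case $\phi(x') + j \geq h_{n+1}$ (wraparound in $\cT_{n+1}$).

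In the no-wraparound case, $\phi(G^j x') = \phi(x') + j$, and a hypothetical coincidence $\phi(G^i x) = \phi(G^j x')$ rearranges to $\phi(x) - \phi(x') = j - i$, which is impossible since the left-hand side has absolute value strictly greater than $h_{n+1}/(n+1)^2$ while the right-hand side has absolute value at most $h_{n+1}/(n+1)^2$. In the wraparound case, if $G^j(x')$ has left $\cT_{n+1}$ without returning it lies in no level of $\cT_{n+1}$ and we are done; otherwise $G^j(x')$ has re-entered $\cT_{n+1}$ after traversing $a_{n+1,k_{x'}}$ spacers (where $k_{x'}$ denotes $x'$'s subcolumn of $\cT_{n+1}$ inside $\cT_{n+2}$), and $\phi(G^j x') = \phi(x') + j - h_{n+1} - a_{n+1,k_{x'}}$. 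Setting $\phi(G^i x) = \phi(G^j x')$ rearranges to
\[
\phi(x') - \phi(x) = h_{n+1} + (i-j) + a_{n+1, k_{x'}} \;\geq\; h_{n+1} - \lfloor h_{n+1}/(n+1)^2 \rfloor,
\]
whereas $\phi(x') - \phi(x) \leq h_{n+1} - 1 - \phi(x) \leq h_{n+1} - 1 - \lfloor h_{n+1}/(n+1)^2 \rfloor$, again a contradiction.

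I expect the main obstacle to be handling the wraparound case cleanly. The quantitative feature $i, j \leq h_{n+1}/(n+1)^2 \ll h_{n+1}$ is essential: it ensures that the orbit of $x'$ can wrap around at most once within $j$ steps, keeping the analysis finite. A minor boundary subtlety occurs if $x'$ sits in the last subcolumn $k_{x'} = p_{n+1}$ of $\cT_{n+1}$ inside $\cT_{n+2}$, where the ``next subcolumn'' does not exist; but then $G^j(x')$ is forced outside $\cT_{n+1}$, reducing us to the trivial sub-case. Otherwise the argument is pure bookkeeping of level indices.
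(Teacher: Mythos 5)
Your proposal is correct and follows essentially the same route as the paper: reduce \eqref{ffo} to the statement that $G^i(x)$ and $G^j(x')$ cannot share a level of $\cT_{n+1}$, then do level bookkeeping using the two inputs $x\in F^G_{n+1}$ (level of $x$ in the middle range of $\cT_{n+1}$) and $x'\in D_G^{x,n+1}$ (level of $x'$ far from that of $x$), together with $i,j\leq h_{n+1}/(n+1)^2$. The only difference is presentational: the paper argues by contradiction, transferring the coincidence to ``$x$ and $G^{j-i}x'$ lie in the same level'' and leaving the wrap-around discussion implicit, whereas you carry out the wrap-around case explicitly — which is the same arithmetic and yields the same contradiction.
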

\begin{proof}Let $m_2:=2\max(n_1,n_3)$ and fix $n\geq m_2$. Let $x$, 
$x'$ and $i,j$ be as in the assumptions of the lemma. 
Assume by contradiction that  $d_H(G^i(x), G^j(x'))<\frac{1}{h_n}$. 
By \eqref{eq:disthor} this implies that $G^i(x)$ and 
$G^j(x')$ are in the same level of $\cT_{n+1}$. Since $x\in F^G\subset F_{n+1}^G$ 
(see \eqref{eq:fng}) and $0\le i\le\frac{h_{n+1}}{(n+1)^2}$ , this implies that $G^ix$ and $G^j(x')$ are both located $i$ levels above $x$ in $\cT_{n+1}$.
Therefore $x$  and $G^{j-i}x'$ are in the same level of $\cT_{n+1}$. But $|j-i|\leq 
\frac{h_{n+1}}{(n+1)^2}$, which contradicts the fact that $x'\in 
D^G_x\subset D_G^{x,n+1}$ (see \eqref{dgn}). The contradiction finishes the 
proof.
\end{proof}

\section{A proposition which implies Theorem \ref{main2}}
For the rest of the paper $(T,S)\in \cL^{\gamma,\gamma',\delta}$ with 
$0<\delta<\gamma<\gamma'<1$ are fixed. Since we deal with two transformations 
$T$ and $S$, we will denote the horizontal distance (see \eqref{eq:disthor}) for 
$T$ and $S$ respectively by $d_1$ and $d_2$. 

Recall from \eqref{fg} the sets $F^T$ and $F^S$. Let $\cP=\cP_n$ for some $n\ge 3$. For $(x,y),(x',y')\in 
\cT_\infty^T\times\cT_\infty^S$, and $N\in \N$, consider a matching $\theta=(i_s,j_s)_{s=1}^r$ of $\cP_0^N(x,y)$ and $\cP_0^N(x',y')$. Then for each $k\in \N$ we define 
\begin{multline}\label{eq:ats}
%
% A_{n}^{j,N}((x,y)(x',y')):=\{r \in [0,N]\;:\; \\(T^{i_r}\times 
% S^{i_r})(x,y),(T^{j_r}\times S^{j_r})(x',y')\in (F^T\cap \cT_n^T)\times (F^S\cap 
% \cT_n^S) \text{ and }\\
% 2^{-j-1}\leq 
% \max\left(d_1(T^{i_r}x,T^{j_r}x'),d_2(S^{i_r}y,,S^{j_r}y')\right)<2^{-j}\}.
%
%
A_{\theta}^{k}((x,y)(x',y')):=\Bigl\{s \in \{1,\ldots,r\}\;:\; \\(T^{i_s}\times 
S^{i_s})(x,y),(T^{j_s}\times S^{j_s})(x',y')\in (F^T\cap \cT_n^T)\times (F^S\cap 
\cT_n^S) \text{ and }\\
2^{-k-1}\leq 
\max\left(d_1(T^{i_s}x,T^{j_s}x'),d_2(S^{i_s}y,,S^{j_s}y')\right)<2^{-k}\Bigr\}.
\end{multline}
%One could take any geometric sequence instead $(2^{-j})_{j\in \N}$. 
Notice that by definition of the partition $\cP_n$ (every level of 
$\cT_n^T\times \cT_n^S$ is a different atom), the definition of the horizontal 
distance and \eqref{eq:expgr},  we have for each $s\in \{1,\ldots,r\}$
$$
 \max\left(d_1(T^{i_s}x,T^{j_s}x'),d_2(S^{i_s}y,,S^{j_s}y')\right)\le \frac{1}{h_n}
 \le \frac{1}{2^{n-1}}<\frac{1}{2^{n/2}}
$$
Hence for 
%$j<\frac{\log n}{2}$ we have
$k<\frac{n}{2}$, we have
\begin{equation}\label{Aempty}
A_{\theta}^{k}((x,y)(x',y'))=\emptyset.
\end{equation}
The following proposition implies Theorem \ref{main2}:
\begin{proposition}\label{prop:LB} 
There exist  $n_0\geq {100}$,  $N_0\in \N$ 
and a set $B\times C\subset \cT_\infty^T\times \cT_\infty^S$, 
$(\mu_T\times\mu_S)(B\times C)\geq 99/100$, for which the following holds: for 
every $(x,y)\in B\times C$ there exists a set $D_x\times D_y\in 
\cT_\infty^T\times \cT_\infty^S$, $(\mu_T\times\mu_S)(D_x\times D_y)\geq 99/100$ 
such that for every $(x,y)\in B\times C$, $(x',y')\in D_x\times D_y$
and every $N\geq N_0$ 
\begin{enumerate}
\item[$(P1)$] for each $(z,w)\in\{(x,y),(x',y')\}$, 
$$\left|\{i\in [0,N]\;:\; (T^iz,S^iw)\in (\cT^T_{n_0}\cap 
F^T)\times (\cT^S_{n_0}\cap F^S)\}\right| \geq \frac{9}{10}N;$$
\item [$(P2)$]for every $N>N_0$, every $k\in\N$ and every matching $\theta=(i_s,j_s)_{s=1}^r$ of $(\cP_{n_0})_0^N(x,y)$ and $(\cP_{n_0})_0^N(x',y')$, we have 
$$
\left|A_{\theta}^{k}((x,y)(x',y'))\right|\leq \frac{N}{k^2}.
$$
\end{enumerate}
\end{proposition}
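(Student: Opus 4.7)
The plan is to build the good sets $B,C,D_x,D_y$ from the auxiliary sets $F^T,F^S,D^T_x,D^S_y$ of Section~\ref{sec:distest} together with a Birkhoff-good set for $T\times S$, then verify (P1) directly from Birkhoff and attack (P2) via a combinatorial argument built on Lemma~\ref{comb} and Lemmas~\ref{lem:1}--\ref{lem:2}. Since $T$ is weakly mixing and $S$ is rank one (hence ergodic), the product $T\times S$ is ergodic. I choose $n_0\geq 100$ so that $E:=(F^T\cap\cT^T_{n_0})\times(F^S\cap\cT^S_{n_0})$ has product measure at least $1-10^{-3}$, and obtain from Birkhoff and Egorov a threshold $N_0$ and a set $B\times C$ of measure $\geq 99/100$ (with $B\subset F^T$, $C\subset F^S$) on which the time averages of $\mathbf{1}_E$ along $T\times S$ exceed $9/10$ for every $N\geq N_0$. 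Taking $D_x:=F^T\cap D^T_x\cap B$ and $D_y:=F^S\cap D^S_y\cap C$ (both of measure $\geq 99/100$ by~\eqref{fg} and~\eqref{dx} once $n_0$ is large enough), property (P1) is immediate for $(x,y)\in B\times C$ and $(x',y')\in D_x\times D_y$.

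For (P2), fix $k\geq n_0/2$ (otherwise $A^k_\theta=\emptyset$ by~\eqref{Aempty}) and a matching $\theta$. The $\delta$-alternating property forces the log-heights of $T$ and $S$ to interlace with multiplicative gap at least $1+\delta$, so for $k$ large only one of the two ladders can enter the interval $(2^k,2^{k+1}]$. I treat the case $h^T_m\in(2^k,2^{k+1}]$ (the $S$-case is symmetric, and the remaining case gives $A^k_\theta=\emptyset$). Then every $s\in A^k_\theta$ satisfies $d_1(T^{i_s}x,T^{j_s}x')=1/K$ with $K:=h^T_m$, while $d_2(S^{i_s}y,S^{j_s}y')\leq K^{-(1+\delta)}$. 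Fix $\xi\in(0,\delta)$ small in terms of $\gamma,\gamma',\delta$ so that $K^\xi\geq 4k^2$ for every $k\geq n_0/2$.

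The plan is to apply Lemma~\ref{comb} with this $K$ and $\xi$, which reduces (P2) to verifying $\min(|I(K^{1+\xi},s_0)|,|J(K^{1+\xi},s_0)|)\leq 2K$ for every $s_0\in A^k_\theta$. I would first use Lemma~\ref{lem:1} on the rebased pair $(T^{i_{s_0}}x,T^{j_{s_0}}x')\in F^T\times F^T$: any two matches in $A^k_\theta$ within the $i$-window that share the same shift $\sigma=j_s-i_s$ must differ by less than $K$ in their $i$-coordinate, so each shift contributes $O(K^\xi)$ matches. I would then invoke the $S$-distance constraint together with $h^S_{m'+1}\gg K^{1+\xi}$ to pin the shifts appearing in the window to a single congruence class modulo $h^S_{m'+1}$, bounding the number of available shifts by $N/h^S_{m'+1}\leq NK^{-1-\delta}$. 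Combining these counts, together with a subdivision of $[0,N]$ into blocks of length at most $K^{2+\delta-\xi}$ to absorb very large $N$, yields the required $\min\leq 2K$, and Lemma~\ref{comb} then delivers $|A^k_\theta|\leq 4N/K^\xi\leq N/k^2$.

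The main obstacle is precisely this verification of the Lemma~\ref{comb} hypothesis: one has to balance the per-shift $T$-rigidity of Lemma~\ref{lem:1} against the across-shift $S$-congruence coming from the alternating gap, while carefully handling spacer corrections in both ``rigidity modulo the tower'' statements. Lemma~\ref{lem:2}, through the assumptions $x'\in D^T_x$ and $y'\in D^S_y$, plays the supporting role of ruling out accidental small-time alignments between the two orbits which would spoil either rigidity step, while the weak mixing of $T$ is what guarantees the ergodicity of $T\times S$ required for the Birkhoff input of (P1).
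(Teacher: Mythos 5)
Your skeleton for $(P2)$ --- apply Lemma~\ref{comb} with $K\approx 2^k$ and verify its hypothesis through Lemmas~\ref{lem:1}--\ref{lem:2} --- is indeed the paper's route, but the verification of $\min\bigl(|I(K^{1+\xi},s)|,|J(K^{1+\xi},s)|\bigr)\le 2K$ does not close as you describe. First, \eqref{short:bl} only gives that two matches of $A^k_\theta$ sharing the same shift have displacement $<h^T_n\le 2K$, i.e.\ at most $O(K)$ matches per shift; your claim that ``each shift contributes $O(K^\xi)$ matches'' does not follow from ``differ by less than $K$''. Second, the number of shifts has to be controlled \emph{inside the window of length $K^{1+\xi}$}, not globally: bounding the available shifts by $N/h^S_{m'+1}$ over all of $[0,N]$ and then cutting $[0,N]$ into blocks of length $K^{2+\delta-\xi}$ does not verify a hypothesis that is stated per window, and with the corrected per-shift count your arithmetic gives at best $O\bigl(NK^{-1-\delta}\bigr)\cdot O(K)$, which is not $\le 2K$ for large $N$. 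The missing step is exactly the paper's key use of \eqref{unif:dist}: applying it to $S$ at the scale $v$ with $h^S_v\le h^T_n<h^S_{v+1}$, together with the alternation bound $(h^T_n)^{1+2\xi}<h^S_{v+1}/(v+1)^3$, shows that \emph{every} match $w>s$ of $A^k_\theta$ lying in the product window $[i_s,i_s+(h^T_n)^{1+2\xi}]\times[j_s,j_s+(h^T_n)^{1+2\xi}]$ satisfies $i_w-i_s=j_w-j_s$, i.e.\ there is a single shift per window; only then does \eqref{short:bl} for $T$ cap the common displacement by $h^T_n\le 2K$ and yield the bound $2K$. Your ``single congruence class modulo $h^S_{m'+1}$'' is the right intuition, but you never convert it into ``one shift per window'', which is what the count actually needs.

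You also never verify the side condition $8K^{1+\xi}\le N$ of Lemma~\ref{comb}, and this is precisely where Lemma~\ref{lem:2} is indispensable, not the vague ``ruling out accidental small-time alignments'' role you assign it: if $A^k_\theta\ne\emptyset$ and $N\le 2^{k(1+2\xi)+3}$, then $h^S_m\ge N^{1+\eta}$, and Lemma~\ref{lem:2} (using $y'\in D^S_y$ and $i_s,j_s\le N\le h^S_m/m^2$) forces $d_2(y_s,y'_s)\ge 1/h^S_{m-1}$, contradicting $d_2(y_s,y'_s)=1/h^S_m$; this gives \eqref{eq:nolo} and legitimizes the appeal to Lemma~\ref{comb}. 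Since $(P2)$ must hold for \emph{every} $k$, the regime where $K^{1+\xi}$ is comparable to or larger than $N$ cannot be skipped --- your block subdivision worries about large $N$, which is harmless, while the genuine danger is $N$ small relative to $K^{1+\xi}$. A smaller point on $(P1)$: Egorov applied to Birkhoff averages of the product system produces a good set that is not a product set, and a set of large measure in a product space need not contain a large product subset; since the target set $(F^T\cap\cT^T_{n_0})\times(F^S\cap\cT^S_{n_0})$ is itself a product, the clean fix (and the paper's choice) is to apply the ergodic theorem separately to $T$ and to $S$ and intersect the two sets of good times, which also makes the weak mixing of $T$ unnecessary for this part.
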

The proof of Proposition \ref{prop:LB} is the most technical part of the paper. 
We will devote a separate section to its proof. Let us first show how 
Proposition \ref{prop:LB} implies Theorem \ref{main2}.
\begin{proof}[Proof of Theorem \ref{main2}] To simplify the notation, set 
$\cP:=\cP_{n_0}$. 
We will prove the following: \\
\textbf{Claim:} For every $(x,y)\in B\times C$, $(x',y')\in D_x\times D_y$ and 
every $N\geq N_0$, we have 
\begin{equation}\label{flarge}
\bar{f}\left(\cP^N_0(x,y),\cP^N_0(x',y')\right)\geq 1/100.
\end{equation}
Before we show the \textbf{Claim}, let us show how it implies the result.  
Assume by contradiction that the process $(T\times S,\cP)$ is 
LB. Let $\epsilon:=\frac{1}{200}$ and let $N_\epsilon\in \N$, $A_\epsilon\subset 
\cT_\infty^T\times\cT_\infty^S$, $(\mu_T\times\mu_S)(A_\epsilon)>1-\epsilon$ be 
from Definition \ref{def:LB}.
Take 
$N\geq \max(N_0,N_\epsilon)$. Notice that for every $(x,y)\in (B\times C)\cap 
A_\epsilon$, we have 
\be\label{eq:emt}
(D_{x}\times D_y)\cap A_\epsilon=\emptyset.
\ee
Indeed, if the set above is non-empty, then there exists $(x,y)\in (B\times 
C)\cap A_\epsilon$ and $(x',y')\in (D_x\times  D_y)\cap A_\epsilon$, but then by 
\eqref{flarge} and \eqref{eq:leqeps}
$$
1/100\leq \bar{f}\left(\cP^N_0(x,y),\cP^N_0(x',y')\right)\leq \epsilon
$$
and this is a contradiction. So \eqref{eq:emt} holds. This in turn contradicts 
$\mu_T\times\mu_S(B\times C),\mu_T\times\mu_S(D_x\times D_y)\geq 99/100$ and 
$\mu_T\times\mu_S (A_\eps)\geq 1-\epsilon$. So the proof is finished up to 
proving the \textbf{Claim}.

To prove the \textbf{Claim}, we will show that for every matching $\theta=(i_s,j_s)_{s=1}^r$ of $\cP_0^N(x,y)$ and $\cP_0^N(x',y')$, we have $r\leq \frac{9N}{10}$. Fix any such matching $\theta=(i_s,j_s)_{s=1}^r$ and let
$$
H_N:=\Bigl\{s\in\{1,\ldots,r\}\;:\;  (T^{i_s}x,S^{i_s}y), (T^{j_s}x',S^{j_s}y')\in 
(\cT_{n_0}^T\cap F^T)\times ( \cT_{n_0}^S)\cap F^S) \Bigr\}.
$$
By  $(P1)$ we have $|\{1,\ldots,r\}\setminus H_n|\leq \frac{1}{10}N$, hence the
\textbf{Claim} follows by showing that
\begin{equation}\label{unvn}
|H_N|\leq \frac{7}{10}N.
\end{equation}
Notice that by definition of $A_{\theta}^{k}((x,y)(x',y'))$,  we 
have 
$$
H_N=\bigcup_{k=0}^{+\infty}A_{\theta}^{k}((x,y)(x',y')).
$$
Therefore, using \eqref{Aempty} and remembering that $n_0\ge 100$, we get  
$$
|H_N|\leq \sum_{k\geq n_0/2}\left|A_{\theta}^{k}((x,y)(x',y'))\right|
\le \sum_{k\ge 50}\left|A_{\theta}^{k}((x,y)(x',y'))\right|.
$$ 
By $(P2)$, this implies that 
$$
|H_N|\leq N \sum_{k\ge 50}\frac{1}{k^2}\leq \frac{7}{10}N.
$$
This shows \eqref{unvn}, which concludes the proof of the 
\textbf{Claim} and the proof of Theorem \ref{main2}.
\end{proof}

\section{Proof of Proposition \ref{prop:LB}}
This section will be devoted to the proof of Proposition \ref{prop:LB}. 
We will divide the proof into several steps. 

\subsection{Construction of $B$ and $C$ in Proposition \ref{prop:LB}}
Recall that  $(T,S)\in \cL^{\gamma,\gamma',\delta}$.  
{Set $\xi:=\min(\gamma,1-\gamma',\delta)/100$.} To define the sets $B$ and 
$C$, we will use some notation from Section \ref{sec:distest}.
First let $n_0$ be large enough 
so that
$$
\mu_G(\cT^G_{n_0})\geq 1-2^{-100}\;\;\text{ for } G\in \{T,S\}.
$$

Since the construction follows similar lines for $T$ and $S$ we will do it 
simultaneously for $G\in \{T,S\}$. First recall the definition of $F^G$ from~\eqref{fg}.
\smallskip
By the ergodic theorem, there exists a set $F_{erg}^G$, 
$\mu(F_{erg}^G)>1-10^{-4}$ and a number $m_3\in \N$ such that for every $x\in 
F_{erg}^G$ and every $N\geq m_3$,
\begin{equation}\label{erg:th}
\Bigl|\left\{j\in[0,N-1]\;:\;G^jx\in F^G\cap\cT_{n_0}^G \right\}\Bigr|\geq (1-10^{-4})N.
\end{equation}
We then define $B:=F^T\cap F_{erg}^T$ and $C:=F^S\cap F_{erg}^S$. We will write this in the 
product form:
\begin{equation}\label{bc}
B\times C=(F^T\cap F_{erg}^T)\times (F^S\cap F_{erg}^S).
\end{equation} 
Notice in particular that $(\mu_T\times \mu_S)(B\times C)\geq 99/100$ as 
required in Proposition \ref{prop:LB}.

\paragraph{Construction of $D_x$ and $D_y$.}
 We define 
\begin{equation}\label{def:dx}
D_x\times D_y:=(D^T_x\cap B)\times (D^S_y\cap C),
\end{equation}
where $D^T_x$ and $D^S_y$ come from \eqref{dx} for $G=T$ and $G=S$ respectively 
(see also \eqref{dgn}). Notice that $(\mu_T\times \mu_S)(D_x\times D_y)\geq 
99/100$ as in the statement of Proposition \ref{prop:LB}

\paragraph{Definition of $N_0$ and proof of $(P1)$ in Proposition \ref{prop:LB}}

Let $N_0:=\max\{m_3,m_2,m_1, n_3,n_1\}$, where $m_3$ comes from~\eqref{erg:th}, $m_2$ is from 
Lemma \ref{lem:2}, $m_1$ is from Lemma \ref{lem:1}, $n_1$ 
from \eqref{fg} and $n_3$ from \eqref{dx}\footnote{All the constants come in two 
copies: for $T$ and $S$, for instance we have $m_3^T$ and $m_3^S$ and we define 
$m_3:=\max(m_3^T,m_3^S)$. We define all other constants analogously.}.
\smallskip
Notice now that for $(x,y)\in B\times C$ and $(x',y')\in D_x\times D_y\subset 
B\times C$ (see \eqref{def:dx}) and for $N\geq N_0\geq m_2$, 
$(P1)$ holds since by \eqref{erg:th}, for any $(z,w)\in B\times C$,
$$\left|\{i\in [0,N]\;:\; (T^iz,S^iw)\in ({F^T}\cap \cT^T_{n_0})\times ({F^S}\cap \cT^S_{n_0})\}\right| 
\geq \left(1-\frac{2}{10^4}\right)N.
$$
%for $(z,w)\in \{(x,y),(x',y')\}$. Therefore we only have to prove $(P2)$.

\subsection{Proof of $(P2)$ in Proposition \ref{prop:LB}}
\begin{proof}
We will use Lemma \ref{lem:1}, Lemma \ref{lem:2} for $G=T$ and $G=S$ and then 
Lemma \ref{comb}. Fix $N > N_0$,  $(x,y)\in B\times C$ and $x',y'\in 
D_x\times D_y$. Consider any matching $\theta=(i_s,j_s)_{s=1}^r$ between $(\cP_{n_0})_0^{N}(x,y)$ and 
$(\cP_{n_0})_0^{N}(x',y')$, and let $k\in\N$.
If $k<n_0/2$, then $A_{\theta}^{k}((x,y),(x',y'))=\emptyset$ (see 
\eqref{Aempty}) and $(P2)$ follows trivially. We therefore assume that 
$k\ge n_0/2$.  

Take any $s\in \{1,...,r\}\cap A_{\theta}^{k}((x,y),(x',y'))$.  To simplify 
notation, denote $x_s=T^{i_s}x, x'_s=T^{j_s}x'$ and $y_s=S^{i_s}y, 
y'_s=S^{j_s}y'$. By definition of $A_{\theta}^{k}((x,y),(x',y'))$, we have
\be\label{eq:jk0}
2^{-k-1}\leq \max\left(d_1(x_s,x'_s),d_2(y_s,y'_s)\right)<2^{-k}.
\ee
Let $n,m\in \N$ be unique such that $(h_n^T)^{-1}=d_1(x_s,x'_s)$ and 
$(h_m^S)^{-1}=d_2(y_s,y'_s)$.  
We assume WLOG that $h_n^T < h_m^S$ ({notice that, by the $\delta$-alternation, we cannot have $h_n^T=h_m^S$,} and the case %$2^k< h_m^S\leq 2^{k+1}$ 
$h_n^T > h_m^S$ is analogous). Then by~\eqref{eq:jk0}, we have
\begin{equation}\label{h2}
2^k<h_n^T\leq 2^{k+1},
\end{equation}
 Then, again by the 
$\delta$-alternation and \eqref{eq:jk0} we know that
\begin{equation}\label{smallhm}
h_m^S\geq 2^{k(1+\frac{\delta}{2})}.
\end{equation}
%Recall the definition of $\xi$ from Lemma~\ref{lem:1}.
We will first show that $2^{k(1+2\xi)+3}<N$. Indeed,  otherwise 
we would have $N\le 2^{k(1+\delta/50)+3}$ (recall that $\xi\le\delta/100$).
Then  by \eqref{smallhm}
there would exist some $\eta=\eta(\delta)>0$ such that $h_m^S\geq N^{1+\eta}$; in particular, $\frac{h_m^S}{m^2}\geq N$ ($m$ 
is sufficiently large). 
By an application of Lemma \ref{lem:2} with $y\in F^S$, $y'\in D_y^S$ and $S^{i_s}y\in F^G\subset \cT_m$, 
and since $0\leq i_s,j_s\leq N\leq \frac{h_m^S}{m^2}$, we would get
$$
\frac{1}{h_m^S}=d_2(y_s,y'_s)=d_2(S^{i_s}y,S^{j_s}y')\geq \frac{1}{h_{m-1}^S},
$$
a contradiction. Hence 
\be\label{eq:nolo}
2^{k(1+2\xi)+3}<N.
\ee

Let $v=v(n)$ be unique such that 
$$
h_v^S\leq h_n^T<h_{v+1}^S.
$$
By the $\delta$-alternation, since $n$ is large enough and $\xi\leq 
\delta/100$, we know that 
\be\label{eq:hesep}
(h_n^T)^{1+2\xi}\leq\frac{h_{v+1}^S}{(v+1)^3}\;\;\text{ and } \;\;
(h_{v}^S)^{1+2\xi}\leq h_n^T.
\ee 
Notice that by definition of $v$ and since $h_n^T<h_m^S$,  we have $m\geq v+1$ and therefore 
 $d_2(y_s,y'_s)=(h_m^S)^{-1}\leq (h_{v+1}^S)^{-1}$. Moreover,  $y_s,y_s'\in F^S$ 
(see \eqref{eq:ats}). Then, by~\eqref{unif:dist} (for $n=v+1$, $y_s$, $y'_s$ and 
$G=S$), for every $0\leq i,j\leq (h_n^T)^{1+2\xi}<\frac{h_{v+1}^S}{(v+1)^3}$, 
$i\neq j$, we get
\begin{equation}\label{eq12}
d_2(S^i(y_s),S^j(y'_s))\geq (h^S_{v})^{-1}\stackrel{\eqref{eq:hesep}}{\geq} 
(h_n^T)^{-\frac{1}{1+2\xi}}\stackrel{\eqref{h2}}{\geq} 2^{-k+1}.
\end{equation}
{(For the last inequality, we also used the fact that $k\ge n_0/2$ is large enough.) }
Hence, if for some $w>s$, we have
  $$(i_w,j_w)\in [i_s,i_s+(h_n^T)^{1+2\xi}]\times[j_s,j_s+(h_n^T)^{1+2\xi}]\cap 
A_{\theta}^{k}((x,y),(x',y')),$$ 
then 
$i_w-i_s=j_w-j_s$. Indeed, if not then by \eqref{eq12} for $i=i_w-i_s$ and $j=j_w-j_s$
$$
d_2(y_w,y'_w)=d_2(S^{i_w-i_s}(y_s),S^{j_w-j_s}(y'_s))\geq 2^{-k+1}.
$$
This would contradict the definition 
of $A_{\theta}^{k}((x,y),(x',y'))$.
\smallskip
So $i_w-i_s=j_w-j_s$. Let $w>s$ be such that 
$i_w-i_s=j_w-j_s\in [h_n^T,(h_n^T)^{1+2\xi}]$. By \eqref{short:bl} (for $G=T$ 
and $x_s,x_s'\in F^T$) and since $x_w,x'_w\in F^T$ (see \eqref{eq:ats}) we get 
for $r_w:=i_w-i_s=j_w-j_s$,
\begin{equation}\label{eq23}
d_1(x_w,x'_w)=d_1(T^{r_w}(x_s),T^{r_w}(x'_s)) \geq 
(h^T_{n-1})^{-1}\stackrel{\eqref{h2}}{\geq}2^{-k+1}. 
\end{equation}
{The last inequality holds because $T\in C_{\gamma,\gamma'}$, hence $(h_{n-1}^T)^{-1}\ge (h_n^T)^{-\frac{1}{1+\gamma}} \ge (h_n^T)^{-\frac{1}{1+2\xi}}$ and we remember the last inequality in~\eqref{eq12}.}
Therefore, if $w>s$ is such that 
$$(i_w,j_w)\in [i_s+h_n^T,i_s+(h_n^T)^{1+2\xi}]\times[j_s+h_n^T,j_s+(h_n^T)^{1+2\xi}] $$
%
%$r_w\in [h_n^T,(h_n^T)^{1+2\xi}]$, 
then by 
\eqref{eq23}, $w\notin A_{\theta}^{k}((x,y)(x',y'))$.

By \eqref{h2} it follows that if  
$$ (i_w,j_w)\in 
[i_s,i_s+2^{j(1+2\xi)}]\times[j_s,j_s+2^{j(1+2\xi)}]\subset[i_s ,i_s+(h_n^T)^{1+2\xi}]\times[j_s ,j_s+(h_n^T)^{1+2\xi}] $$ 
is such that $w\in A_{\theta}^{k}((x,y),(x',y'))$, then   
$$
i_w-i_s=j_w-j_s < h_n^T \le 2^{k+1}.
$$ 
Let $\{w_s\}_{s=1}^{r'}:=A_{\theta}^{k}((x,y),(x',y'))\cap 
\{1,\ldots,r\}$ and consider the matching subsequence given by 
$\{i_{w_s},j_{w_s}\}_{s=1}^{r'}$. 

For $0\le M\le N$ and $w\in \{1,\ldots,r'\}$, consider the sets $I(M,w)\subset\{1,\ldots r'\}$ and $J(M,w)\subset\{1,\ldots r'\}$ defined as in Section~\ref{sec:BegEnd}. Then by the above reasoning, it follows that for $K=2^k$,  we have
$$
\min\left(|I(K^{1+\xi},s)|,|J(K^{1+\xi},s)|\right)\leq 2K.
$$
Since \eqref{eq:nolo} holds, the assumptions of Lemma \ref{comb} are satisfied, and we get 
$$
\left|A_{\theta}^{k}((x,y)(x',y'))\right|\leq \frac{4N}{2^{k\xi}}\leq 
\frac{N}{k^2}.
$$
(The last inequality since $k\ge n_0/2$ is sufficiently large.)
This finishes the proof of $(P2)$ and therefore also the proof of Proposition 
\ref{prop:LB}
\end{proof}

\bibliography{nonLB}

% 
% 
% \begin{thebibliography}{9}
% \bibitem{Fel} J.\ Feldman, {\it New $K$-automorphisms and a problem of 
% Kakutani}. Israel J. Math. 24.1 (1976): 16-38.
% \bibitem{KaWe} A.\ Kanigowski, D.\ Wei, {\it Product of two Kochergin flows with 
% different exponent is not standard}, accepted in Studia Mathematica.
% \bibitem{Kat0} A.\ B.\ Katok, {\it Monotone equivalence in ergodic theory}, 
% (Russian) Izv. Akad. Nauk SSSR Ser. Mat. 41 (1977), no. 1, 104--157.
% \bibitem{Kat1} A.\ B.\ Katok, {\it Combinatorical Constructions in Ergodic 
% Theory and Dynamics}, American Mathematical Society, Providence, 2003.
% \bibitem{KatSto} A. Katok and A. Stepin, {\it Approximations in ergodic theory}, 
% Russ. Math. Sur- veys, 22: 77 - 102, 1967.
% \bibitem{KuGe} P.\ Kunde, M.\ Gerber, {\it A smooth zero-entropy diffeomorphism 
% whose product with itself is loosely Bernoulli}, arXiv:1803.01926 
% \bibitem{OrRuWe} D.\ Ornstein, D.\ Rudolph, B.\ Weiss, {\it Equivalence of 
% measure preserving transformations}, Mem. Amer. Math. Soc., 37(262), 1982.
% \bibitem{Ratner3} M. Ratner, {\it Horocycle flows are loosely Bernoulli}. Israel 
% J. Math. (1978), 31: 122-132.
% \bibitem{Rat1} M. Ratner, {\it The Cartesian square of the horocycle flow is not 
% loosely Bernoulli}. Israel J. Math. 34, no. 1-2, 72--96 (1980).
% \end{thebibliography}
% 
% 

\end{document}